\documentclass[11pt]{amsart}
\usepackage{txfonts}
\usepackage{amscd}
\usepackage{cite}
\usepackage{epsfig}
\usepackage{verbatim}
\usepackage{mathdots}
\usepackage{amssymb}
\usepackage{amsfonts}
\usepackage{amsbsy}
 \usepackage{graphicx}
 \usepackage[usenames]{color} 
 \usepackage{cancel}

\newtheorem{theo}{Theorem}[section]
\newtheorem{lem}[theo]{Lemma}

\newtheorem{coro}[theo]{Corollary}

\theoremstyle{definition}
\newtheorem{defi}[theo]{Definition}

\numberwithin{equation}{section}



 \hoffset=-1cm
\pagestyle {plain}

\newcommand{\R}{{\mathbb R}}
\newcommand{\Z}{{\mathbb Z}}

\newcommand{\Q}{{\mathbb Q}}







\begin{document}

\title{Spectra of Cantor measures}

\author{Xin-Rong Dai}
\address{School of Mathematics and Computational Science, Sun Yat-sen University, Guangzhou, 510275,  P. R. China}
\email{daixr@mail.sysu.edu.cn}
\thanks{The research is partially supported by the NSF of
China (No. 11371383) and the Fundamental Research Funds for the Central Universities.}
\subjclass[2010]{primary 42A65, 42B05, secondary 28A78, 28A80.}
\keywords{Spectral measure, Cantor measure, Bernoulli convolution, spectrum, maximal orthogonal set.}

\maketitle

\begin{abstract}
A spectrum of a probability measure $\mu$ is a countable set $\Lambda$ such that  $\{\exp(-2\pi i\lambda \cdot), \ \lambda \in \Lambda \}$ is an orthogonal basis for $L^2(\mu)$.
In this paper, we consider the problem when a countable set become the spectrum of
the Cantor measure.
Starting from tree labeling  of a maximal orthogonal set, 
 we introduce a new quantity to measure minimal level difference
 between a branch of the labeling tree and its subbranches.
Then we use boundedness and linear increment of that level difference measurement
to justify whether a given maximal orthogonal set is a spectrum or not.
This together with the tree labeling of a maximal orthogonal set provides fine structures of spectra of  Cantor measures.
 As applications of our justification, we provide a characterization for the integrally expanding set $K\Lambda$ of a spectrum $\Lambda$
  to be  a spectrum again, thereby we find all integers $K$ such that
  $K\Lambda_4$ are spectra of the $1/4$-Cantor measure $\mu_{4}$, where
  $\Lambda_4:=\{\sum_{n=0}^\infty d_n 4^n: d_n\in \{0, 1\}\}$
  is the first known spectrum for $\mu_{4}$.
Furthermore, we construct  a spectrum $\Lambda$  such that
the integrally shrinking set
$\Lambda/K$ is a maximal orthogonal set but not a spectrum for some integer $K$.
  \end{abstract}


\section{Introduction}
\label{introduction.section}
\setcounter{equation}{0}

A fundamental problem in harmonic analysis is whether $\{\exp({-2\pi i\lambda x}), \ \lambda\in\Lambda\}$ is an orthogonal basis of
 $L^2(\mu)$, the space of all square-integrable functions with respect to
a probability measure $\mu$.
The above probability measure $\mu$ is known as a {\em spectral measure} and the countable set $\Lambda$ as its {\em spectrum}.
Spectral theory for the Lebesgue measures on sets has been studied extensively since
it initialed by Fuglede 1974 \cite{F}, see
 \cite{LW3,  Fal, T3} and references therein. Recently, He, Lai and Lau \cite{HLL} proved that a spectral measure is pure type (i.e. either absolutely continuous or singular continuous or counting measure).
%
For singular continuous measures, the first spectral measure was found
by Jorgenson and Pederson in 1998 \cite{JP}, they proved that $\Lambda_4:=\{\sum_{n=0}^\infty d_n 4^n: d_n\in \{0, 1\}\}$ is a spectrum of the Bernoulli convolution $\mu_4$.
Since then, 
some significant progresses have been made  and
 various new phenomena  different from spectral theory for the Lebesgue measure
 have been discovered \cite{JP, DuJ09, HuL, DS, D, DHL, DHL2, DHS, DHS13, DHLau, DL,  LW, S}. For instance,
Fourier frames on  the unit interval $[0, 1)$  have  Beurling dimension one \cite{Lan},
  while spectra of a singular measure could have zero  Beurling dimension \cite{DHL}. Here we define the \emph{Cantor measure} $\mu_{q,b}$ with $2\le q\in \Z$ and  $q<b\in \R$,
\begin{equation} \label {eq1.1}
\mu_{q,b}=\frac{1}{q}\sum_{i=0}^{q-1}\mu_{q,b}\big( f_i^{-1}(\cdot)\big),
\end{equation}
is a self-similar probability measure
associated with the iterated function system,
$$
f_i(x)=x/b+i/q, \  \ i=0,1,\dots, q-1.
$$
And we call the special case $\mu_b:=\mu_{2,b}$ the {\em Bernoulli convolutions}.
In 1998, Jorgenson and Pederson proved in their seminal paper \cite{JP} that
Bernoulli convolutions $\mu_b$ with $b\in 2\Z$  are  spectral measures. 
 The converse problem stood for a long time and it was solved in \cite{D} by
 the author in 2012 after important contributions by Hu and Lau \cite{HuL}.
 The complete characterization
 for  Bernoulli convolutions  in \cite{D}
  was recently extended  by He, Lau and the author \cite{DHLau} to  the Cantor measure $\mu_{q, b}$
    that  it is a spectral measure if and only if 
 \begin{equation}\label{bq.assumption}
  2\le q,\ b/q\in  \Z.\end{equation}
The Cantor measures $\mu_{q, b}$ with $q$ and $b$ satisfying \eqref{bq.assumption} are few of known singular spectral measures, but the
structure of their spectra is little known, even for the Bernoulli convolution $\mu_4$.
In this paper, we explore fine structure of spectra of these Cantor measures.

\smallskip

  Our exploration starts from  tree structure of a {\em  (maximal) orthogonal set} $\Lambda$, 
   meaning that
   $\{\exp({-2\pi i\lambda x}), \ \lambda\in\Lambda\}$ is a (maximal) orthogonal set of
 $L^2(\mu_{q, b})$. In 2009, Dutkay, Han and Sun gave a complete characterization of the
maximal orthogonal sets of the Bernoulli convolution $\mu_4$ by
 introducing  a  tree labeling tool \cite{DHS}.
Recently,  He, Lai and the author developed a tree labeling technique for
 Cantor measures $\mu_{q,b}$ \cite{DHL}. They proved that
a countable set is a maximal orthogonal set of the Cantor measure $\mu_{q, b}$
 if and only if it can be labeled as a maximal tree, see Theorem \ref{th1.6}.
Thus maximal orthogonal sets have tree structure and they can be built selecting maximal tree appropriately. While a maximal orthogonal set
 is not necessarily a spectrum since it may lack of completeness in $L^2(\mu_{q, b})$. The completeness of maximal orthogonal sets for  Cantor measures $\mu_{q, b}$
 is quite challenging, see
\cite{DHL, DHLau, DHS, DHS13,   JP, DuJ09, S} for various sufficient and necessary conditions. In fact,
 the completeness of exponential sets
is a classical problem in 
 Fourier analysis since 1930s', see  \cite{PW, Lev, OS,Po1, Po2,Y} and references therein for historical remarks and recent advances.

\medskip

The main contribution of this paper is to introduce a quantity ${\mathcal D}_{\tau, \delta}$
to measure minimal level difference between a branch $\delta$ of the labeling tree and its  subbranches, see
Definition \ref{taudistance.definition}.
We show in Theorem \ref{main.thm} that a maximal orthogonal set $\Lambda$
with maximal tree labeling $\tau$ is a spectrum if
 ${\mathcal D}_{\tau, \delta}$ is uniform bounded on all tree branches $\delta$,
and also in Theorem \ref{main2.thm.old} that
it is not a spectrum if 
${\mathcal D}_{\tau, \delta}$
increases linearly to the level
of the tree branches $\delta$.

Unlike spectra of the Lebesgue measure on the unit interval,
a spectrum $\Lambda$ of a singular measure could have the integrally rescaled set $K\Lambda$ being  its spectrum too, see
\cite{JP, DuJ09, DHS13} for  the Bernoulli convolution $\mu_4$.
In this paper, we apply our completeness results  in Theorems \ref{main.thm} and \ref{main2.thm.old}
to characterize
the spectral property of the  rescaled set $K\Lambda$ for a given spectrum $\Lambda$ of the Cantor measure $\mu_{q, b}$
via no repetend of $K$ for the labeling tree of $\Lambda$. As corollaries, we find all integers $K$ such that
  $K\Lambda_4$ are spectra of the Bernoulli convolution $\mu_{4}$, see Corollary \ref{cantor24.cor},
and we 
  construct a spectrum $\Lambda$ of the Cantor measure $\mu_{q, b}$  such that
the rescaled set $\Lambda/(b-1)$
 is its maximal orthogonal set but not its spectrum, see Theorems \ref{multiple.thm}, \ref{App.thm} and \ref{counterexample}.

\medskip

 This paper is organized as follows.
 In Section \ref{main.section}, we recall some preliminaries about (maximal) orthogonal
 sets for Cantor measures, and state our main results.
 In Sections \ref{sufficient.section}, we consider the problem when
 a maximal orthogonal set is a spectrum.
 In Section \ref{necesarry.section}, we discuss the necessity for
 a maximal orthogonal set to be a spectrum. 
In Section \ref{rescaledspectra.section}, we discuss rationally  rescaling  of a spectrum.

\section{Preliminaries and main theorems}
\label{main.section}

%
%

 We start this section from recalling a characterization of orthogonal sets of a probability measure $\mu$
 via its Fourier transform $\hat \mu$,
$$\hat{\mu}(\xi):=\int_{\R} e^{-2\pi i \xi x}d\mu(x).$$
Observe that the zero set of the Fourier transform $\widehat \mu_{q, b}$, see  \eqref{eq2.c} for its explicit expression,
 is
$$Z_{q, b}=\{b^ja:\  a\in \Z\backslash q\Z, 0\le j\in \Z\}.$$
%
%
%
Then  a discrete set $\Lambda$ is an orthogonal set of $ \mu_{q, b}$ if and only if we have the following for orthogonal sets of the Cantor measure $\mu_{q, b}$ \cite{DHL, DHLau}:  
 \begin{equation} \label {orthogonalcharacterization}
\Lambda-\Lambda \subset Z_{q, b} \cup \{0\}.
\end{equation}
As orthogonal sets  (maximal orthogonal sets and  spectra) are invariant under translations, in this paper we always
 normalize them by assuming that
\begin{equation} \label{integrallambda}0\in\Lambda\subset \Z.\end{equation}

\medskip

To introduce the tree structure of the maximal orthogonal set  of the Cantor measure $\mu_{q, b}$, we need some notation and concepts.
Denote $\Sigma_q:=\{0,\cdots, q-1\}$, and $\Sigma_q^n:=\underbrace{\Sigma_q\times\cdots\times \Sigma_q}_n, 1\le n\le \infty$
be the $n$ copies of $\Sigma_q$, and $\Sigma_q^{\ast} := \bigcup_{1\le n<\infty}\Sigma_q^n$.
Given $\delta=\delta_1\delta_2\cdots\in \Sigma_q^{\ast} \cup \Sigma_q^{\infty}$ and $\delta'\in\Sigma_q^{\ast}$, we define $\delta'\delta$ is the concatenation of $\delta'$ and $\delta$, and adopt the notation $0^\infty=000\cdots$, $0^k=\underbrace{0\cdots0}_k$. We call an element in $\Sigma_q^{\ast}\cup\Sigma_q^{\infty}$  as a {\em tree branch}.
 For each tree branch $\delta=\delta_1\delta_2\cdots$, denote 
\begin{equation}
\nonumber
{\delta|_k}:=
\left\{\begin{array}{ll}
\delta_1\delta_2\cdots\delta_k & {\rm when } \ \delta\in\Sigma_q^{\infty}, {\rm and } \\
(\delta0^\infty)|_k & {\rm when}\    \delta\in\Sigma_q^{\ast},
\end{array}\right. \end{equation}
for all $k\ge1$.

\begin{defi}\label{treemapping.def}
For $2\le q, b/q\in \Z$,  we say that
a mapping $\tau:\Sigma_q^\ast\to \{-1, 0, \ldots,
b-2\}$ is  a {\em tree mapping}
if
\begin{itemize}
\item[{(i)}] $\tau(0^n)=0$  for all $n\ge 1$,   and
\item[{(ii)}]
$\tau (\delta) \in \delta_n+q\Z$
if $\delta=\delta_1\cdots\delta_n\in \Sigma_q^n, n\ge 1$,
\end{itemize}
and that a tree mapping $\tau$ is   {\em maximal}
if \begin{itemize}
\item[{(iii)}] for any $\delta\in \Sigma_q^{\ast}$  there exists
$\delta'\in \Sigma_q^\ast$ such that
  $\tau((\delta\delta')|_n)=0$ for sufficiently large integers $n$.
  \end{itemize}
\end{defi}

In \cite{DHL}, He, Lai and the author  established the following
characterization for
 a maximal orthogonal set  of the Cantor measure $\mu_{q, b}$
 via some maximal tree mapping.

\begin{theo}\label{th1.6}{\rm (\!\cite{DHL})}\
Let $2\le q, b/q\in \Z$. Assume that $\Lambda$  is a countable set of real numbers containing zero.
 Then
$\Lambda$ is a maximal orthogonal set of the Cantor measure $\mu_{q, b}$ if and only if there exists a maximal tree mapping $\tau$ such that
$\Lambda = \Lambda(\tau)$, where
\begin{equation}\label{lambdatau.def}
\Lambda(\tau):=\Big\{\sum_{n=1}^\infty \tau(\delta|_n)b^{n-1}:\
  \delta \in \Sigma_q^\ast \ \ \text{such that} \ \ \tau(\delta|_m)=0 \ \ \text{for sufficiently large} \ m\Big\}.
\end{equation}\end{theo}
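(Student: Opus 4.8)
The plan is to connect the number-theoretic orthogonality condition \eqref{orthogonalcharacterization} with the combinatorial clauses of Definition \ref{treemapping.def} through a canonical radix expansion. Since $b/q\in\Z$ and $q\ge 2$ force $b\ge 2q\ge 4$, the digit set $\{-1,0,\ldots,b-2\}$ is a complete residue system modulo $b$ containing $0$, and a short argument (peeling off the least significant digit and controlling the size of the remainder) shows that every $\lambda\in\Z$ has a unique \emph{finite} expansion $\lambda=\sum_{n=1}^\infty c_n(\lambda)b^{n-1}$ with $c_n(\lambda)\in\{-1,\ldots,b-2\}$ and $c_n(\lambda)=0$ for large $n$. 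To each $\lambda$ I attach the residue branch $\delta(\lambda)=\big(c_1(\lambda)\ \mathrm{mod}\ q,\ c_2(\lambda)\ \mathrm{mod}\ q,\ldots\big)\in\Sigma_q^\infty$, which is eventually $0$; under the intended correspondence $\tau(\delta(\lambda)|_n)$ should be exactly the digit $c_n(\lambda)$, so that clause (ii) is automatic and the finiteness of the expansion matches the maximality clause (iii).

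The decisive step, and the main obstacle, is a digitwise reformulation of orthogonality: for distinct $\lambda,\lambda'\in\Z$, if $m$ is the first index with $c_m(\lambda)\ne c_m(\lambda')$, then $\lambda-\lambda'\in Z_{q,b}$ \emph{if and only if} $c_m(\lambda)\not\equiv c_m(\lambda')\ (\mathrm{mod}\ q)$. To prove this I would write $\lambda-\lambda'=b^{m-1}M$ with $M=\sum_{n\ge m}(c_n(\lambda)-c_n(\lambda'))b^{n-m}$; since the two leading digits differ by a nonzero amount of absolute value at most $b-1$, one gets $M\equiv c_m(\lambda)-c_m(\lambda')\not\equiv 0\ (\mathrm{mod}\ b)$, so $b\nmid M$. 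Testing the defining condition of $Z_{q,b}$ against each power $b^j$ then shows the only admissible exponent is $j=m-1$, which succeeds exactly when $M\not\equiv 0\ (\mathrm{mod}\ q)$, i.e. when $c_m(\lambda)\not\equiv c_m(\lambda')\ (\mathrm{mod}\ q)$. Since equal digits force equal residues, this says precisely that orthogonality of a set $\Lambda$ amounts to: any two distinct elements have residue branches that first separate exactly where their digit expansions first differ.

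For sufficiency, given a maximal tree mapping $\tau$ I note that each admissible branch yields, by uniqueness of the expansion, a distinct integer with $c_n=\tau(\delta|_n)$ and residue branch $\delta$; the digitwise lemma together with (ii) then shows that any two elements of $\Lambda(\tau)$ first differ in a digit precisely where their branches separate, hence with distinct residues, so $\Lambda(\tau)$ is orthogonal, and $0\in\Lambda(\tau)$ by (i). For maximality I argue by contradiction: if some $\gamma\in\Z\setminus\Lambda(\tau)$ were orthogonal to all of $\Lambda(\tau)$, I fix a level $k$ and apply clause (iii) at the node $\delta(\gamma)|_k$ to produce $\lambda\in\Lambda(\tau)$ whose branch agrees with $\delta(\gamma)$ up to level $k$, so that $c_n(\lambda)=\tau(\delta(\gamma)|_n)$ for $n\le k$. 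Orthogonality of $\gamma$ and $\lambda$, via the lemma, forces $c_n(\gamma)=\tau(\delta(\gamma)|_n)$ for all $n\le k$; letting $k\to\infty$ gives $\gamma=\sum_n\tau(\delta(\gamma)|_n)b^{n-1}\in\Lambda(\tau)$, a contradiction.

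For necessity, starting from a maximal orthogonal $\Lambda$ with $0\in\Lambda\subset\Z$, I define $\tau$ on the prefixes of the residue branches $\{\delta(\lambda):\lambda\in\Lambda\}$ by $\tau(\delta(\lambda)|_n)=c_n(\lambda)$; the digitwise lemma guarantees this is well defined (two elements sharing a residue prefix cannot have their first digit difference inside that prefix), and (i), (ii) hold by construction. I then extend $\tau$ to the remaining nodes of $\Sigma_q^\ast$, assigning each fresh node the representative $0$ when its last residue label is $0$ and otherwise the representative equal to that label; since appending zeros to a fresh node stays fresh and gives $\tau=0$, while every occupied node lies on the eventually-zero branch of some $\lambda\in\Lambda$, clause (iii) holds at every node and $\tau$ is a maximal tree mapping. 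By construction $\Lambda\subseteq\Lambda(\tau)$, and by the sufficiency direction $\Lambda(\tau)$ is orthogonal; because $\Lambda$ is a \emph{maximal} orthogonal set the inclusion must be an equality, so $\Lambda=\Lambda(\tau)$. The technical heart throughout is the digitwise orthogonality lemma and, in the necessity direction, verifying that maximality of the orthogonal set $\Lambda$ is exactly what upgrades the extended labeling to a maximal tree mapping.
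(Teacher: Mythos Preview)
The paper does not supply its own proof of Theorem~\ref{th1.6}; the result is quoted verbatim from \cite{DHL} and used as a black box throughout. So there is no in-paper argument to compare your proposal against.

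That said, your proposal is correct and is essentially the natural proof. The two ingredients you isolate are exactly the right ones: (a) the digit system $\{-1,0,\ldots,b-2\}$ is a complete residue system modulo $b$, giving every integer a unique finite expansion, and (b) the ``digitwise orthogonality lemma'' saying that for $\lambda\neq\lambda'$ with first differing digit at position $m$, one has $\lambda-\lambda'\in Z_{q,b}$ if and only if $c_m(\lambda)\not\equiv c_m(\lambda')\pmod q$. Your verification of this lemma is clean: writing $\lambda-\lambda'=b^{m-1}M$ with $M\equiv c_m(\lambda)-c_m(\lambda')\pmod b$ forces $b\nmid M$, so the exponent in the $Z_{q,b}$ factorisation must be $m-1$, and since $q\mid b$ the congruence $M\equiv c_m(\lambda)-c_m(\lambda')\pmod q$ finishes it. Both directions of the theorem then follow as you outline; in particular, the maximality argument in the sufficiency direction (producing for each level $k$ an element of $\Lambda(\tau)$ sharing the residue branch of $\gamma$ up to level $k$, and using the lemma to force the \emph{digits} to agree up to level $k$) is the key step and is handled correctly. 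In the necessity direction, your extension of $\tau$ to fresh nodes by $\tau(\delta)=\delta_n$ and the observation that fresh nodes stay fresh under appending zeros gives clause (iii) cheaply, and the final appeal to maximality of $\Lambda$ to upgrade $\Lambda\subset\Lambda(\tau)$ to equality is the standard closing move.
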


Given a maximal tree mapping $\tau: \Sigma_q^\ast\to \{-1, 0, \ldots, b-2\}$, we say that
$\delta\in \Sigma_q^n, n\ge 1$, is a {\em $\tau$-regular branch}
if
$\tau (\delta|_m)=0$ for sufficiently large $m$.
Define $\Pi_{\tau,  n}:  \Sigma_q^{\ast}\cup \Sigma_q^{\infty}\to \R, n\ge 1$,  by
\begin{eqnarray}\label{formally,n}
\Pi_{\tau, n}(\delta)=\sum_{k=1}^n \tau( \delta|_k)b^{k-1}.
\end{eqnarray}
One may verify that the restriction of $\Pi_{\tau, n}$ onto $\Sigma_q^n$ is one-to-one for any $n\ge 1$.
For a $\tau$-regular tree branch $\delta\in
\Sigma_q^\ast$, we can extend the definition $\Pi_{\tau, n}(\delta), n\ge 1$,
in \eqref{formally,n}
to $n=\infty$ by taking limit in \eqref{formally,n},
\begin{eqnarray}\label{formally}
\Pi_{\tau, \infty}(\delta) :=\sum_{k=1}^\infty \tau(\delta|_k)b^{k-1}.
\end{eqnarray}
  Applying the above $b$-nary expression, we conclude that a maximal orthogonal set of the Cantor measure $\mu_{q, b}$ is
the image of $\Pi_{\tau, \infty}$ for some maximal tree mapping $\tau$, 
\begin{equation*}
\Lambda(\tau)  = \big \{\Pi_{\tau, \infty} (\delta): \ \delta\in \Sigma_q^\ast \ \text{\rm are $\tau$-regular \ branches}
\big\}.
\end{equation*}
This together with Theorem \ref{th1.6} suggests that
various maximal orthogonal sets of the Cantor measure $\mu_{q, b}$  could be constructed by selecting maximal tree mapping appropriately.

\medskip


Now we introduce a  quantity to  measure (minimal) level difference between a tree branch and its subbranches,
which plays important role in our study of spectral property of Cantor measures.
For $\delta'\in \Sigma_q^\ast$ and  $\delta\in \Sigma_q^n$ for some $n\ge 1$, define
\begin{equation}\label{Ddelta.def0}
\mathcal{D}_{\tau, \delta} (\delta')=
 \# A_\delta(\delta') +\sum_{n_j\in B_\delta(\delta')}({n_j}-{n_{j-1}}-1),
\end{equation}
where
$A_\delta(\delta'):=\{m\ge 1: \ \tau(\delta \delta'|_m)\ne 0\}$,
$B_\delta(\delta'):=\{m\ge 1: \  \tau(\delta \delta'|_m)\not\in q\Z\}$,  $n_0=0$, and
 $\{n_j\}_{j\ge 1}$ is a strictly increasing sequence of positive integers
given by $\{n_j: j\ge 1\}=A_\delta(\delta')$, and $\# E$ is the cardinality of a set $E$.

\begin{defi}\label{taudistance.definition}
Let $2\le q, b/q\in \Z$ and $\tau: \Sigma_q^\ast\to \{-1, 0, \ldots, b-2\}$ be a maximal tree mapping.
Define 
\begin{equation}\label{Ddelta.delta}
\mathcal{D}_{\tau, \delta} := \inf \big\{{\mathcal D}_{\tau, \delta} (\delta'): \
\delta'\in \Sigma_q^\ast \big\}, \ \ \delta\in \Sigma_q^\ast.
\end{equation}
\end{defi}

Given a maximal tree mapping $\tau: \Sigma_q^\ast\to \{-1, 0, \ldots, b-2\}$, we say that
$\delta\in \Sigma_q^n, n\ge 1$, is a  a {\em $\tau$-main branch}
if
$\tau (\delta|_m)=0$  for all $m>n$.
Clearly $\delta\in \Sigma_q^\ast$ is a $\tau$-regular branch if and only if either $\delta$
 is a $\tau$-main branch or  $\delta0^k$  is for some $k\ge 1$; and  for any $\delta\in \Sigma_q^\ast$ there exists a
  $\tau$-main subbranch $\delta \delta'$, where $\delta'\in \Sigma_q^*$.
For any $\delta\in \Sigma_q^\ast$,  one may verify that the quantity ${\mathcal D}_{\tau, \delta}$ 
is the minimal   distance to its $\tau$-main subbranches,
\begin{equation} \label{Ddelta.delta2}
\mathcal{D}_{\tau, \delta} =  \inf \big\{{\mathcal D}_{\tau, \delta} (\delta'): \ \delta\delta'\
\text{are $\tau$-main branches}\big\}<\infty.
\end{equation}

\medskip

A challenging problem in spectral theory for
the Cantor measure $\mu_{q, b}$
 is when a maximal orthogonal set becomes a spectrum 
\cite{DHL, DHLau, DHS, DHS13,   JP, DuJ09, S}.
Now we present our main results of this paper. In our first main result, a sufficient condition via boundedness of $\mathcal{D}_{\tau, \delta}, \delta\in \Sigma_q^\ast$,
is provided  for   a maximal orthogonal set of the Cantor measure $\mu_{q, b}$
to its spectrum.

\begin{theo}\label{main.thm}
Let $2\le q, b/q\in \Z$. If  $\tau: \Sigma_q^\ast\to \{-1, 0, \ldots, b-2\}$ is a maximal tree mapping
such that
\begin{equation}\label{main.thm.eq1}
  {\mathcal D}_\tau:=\sup \{\mathcal{D}_{\tau, \delta}: \delta \in \Sigma_q^{\ast} \}<\infty,
  \end{equation}
then $\Lambda(\tau)$ in \eqref{lambdatau.def} is a spectrum of the Cantor measure $\mu_{q, b}$.
\end{theo}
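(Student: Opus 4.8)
The plan is to verify completeness through the Jorgensen--Pedersen criterion. Write $m(\zeta):=\frac1q\sum_{j=0}^{q-1}e^{-2\pi i j\zeta/q}$ for the mask of $\mu_{q,b}$, so that $\widehat{\mu_{q,b}}(\xi)=\prod_{k=1}^\infty m(\xi/b^k)$, with $|m|\le1$, $m\equiv1$ on $q\Z$ and $m\equiv0$ on $\Z\setminus q\Z$. Set $Q_\tau(\xi):=\sum_{\lambda\in\Lambda(\tau)}|\widehat{\mu_{q,b}}(\xi+\lambda)|^2$. Since $\Lambda(\tau)$ is a maximal orthogonal set by Theorem~\ref{th1.6}, the exponentials are orthonormal and Bessel's inequality gives $Q_\tau\le1$ on $\R$, while $Q_\tau(0)=1$ because $0\in\Lambda(\tau)$ and $\widehat{\mu_{q,b}}(0)=1$. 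As $\Lambda(\tau)$ is a spectrum exactly when $Q_\tau\equiv1$, the whole task is the reverse inequality $Q_\tau(\xi)\ge1$, and the real work is to turn the combinatorial bound \eqref{main.thm.eq1} into this analytic lower bound.

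The engine is the exact Parseval identity at each finite level. For $n\ge1$ let $\mu_n$ be the $n$-th level self-similar (iterated function system) measure, $\widehat{\mu_n}(\xi)=\prod_{k=1}^n m(\xi/b^k)$, so that $\widehat{\mu_{q,b}}(\xi)=\widehat{\mu_n}(\xi)\,\widehat{\mu_{q,b}}(\xi/b^n)$, and put $\Lambda_n:=\{\Pi_{\tau,n}(w):w\in\Sigma_q^n\}$. Since $\Pi_{\tau,n}$ is injective on $\Sigma_q^n$, the set $\Lambda_n$ has $q^n=\dim L^2(\mu_n)$ elements, and \eqref{orthogonalcharacterization} makes $\{e^{-2\pi i\lambda\cdot}\}_{\lambda\in\Lambda_n}$ an orthonormal basis of $L^2(\mu_n)$; Parseval then yields $\sum_{w\in\Sigma_q^n}|\widehat{\mu_n}(\xi+\Pi_{\tau,n}(w))|^2=1$ for all $\xi$. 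I regard $|\widehat{\mu_n}(\xi+\Pi_{\tau,n}(w))|^2$ as a unit mass distributed over the level-$n$ nodes of the labelling tree; using $q\mid b$, the $q$-periodicity of $m$, and condition (ii) of Definition~\ref{treemapping.def}, the passage from level $n$ to level $n+1$ splits each node's mass among its $q$ children by explicit mask factors, the total remaining $1$.

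Next I would isolate the mass that has genuinely settled. By \eqref{lambdatau.def} and \eqref{Ddelta.delta2} an element $\lambda\in\Lambda(\tau)$ comes from a $\tau$-main branch, and for $N$ past its length its terminal (zero-extended) node lies in $\Lambda_N$ with $\Pi_{\tau,N}(w)=\lambda$. Let $P_m\subset\Lambda(\tau)$ collect the permanent elements arising from $\tau$-main branches of length $\le m$, a finite set with $P_m\subset\Lambda_N$ for $N\ge m$, and set $R_m^{(N)}(\xi):=1-\sum_{\lambda\in P_m}|\widehat{\mu_N}(\xi+\lambda)|^2\ge0$. For each fixed $\lambda\in P_m$ one has $(\xi+\lambda)/b^N\to0$, hence $|\widehat{\mu_N}(\xi+\lambda)|^2\to|\widehat{\mu_{q,b}}(\xi+\lambda)|^2$ as $N\to\infty$; since $P_m$ is finite this gives $\sum_{\lambda\in P_m}|\widehat{\mu_{q,b}}(\xi+\lambda)|^2=1-\lim_{N\to\infty}R_m^{(N)}(\xi)$. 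Because $Q_\tau(\xi)\ge\sum_{\lambda\in P_m}|\widehat{\mu_{q,b}}(\xi+\lambda)|^2$ for every $m$, the theorem reduces to the single estimate that the unsettled mass $\lim_{N\to\infty}R_m^{(N)}(\xi)$ tends to $0$ as $m\to\infty$; equivalently, that no mass escapes onto branches that never reach a $\tau$-main continuation.

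This last estimate is where \eqref{main.thm.eq1} enters and is the main obstacle. The hypothesis $\mathcal D_\tau<\infty$ says, via \eqref{Ddelta.delta2}, that from every node $\delta$ a $\tau$-main continuation $\delta\delta'$ is reachable with $\mathcal D_{\tau,\delta}(\delta')\le\mathcal D_\tau$, and by \eqref{Ddelta.def0} this value is exactly the number of nonzero labels (each giving a mask factor $|m|^2<1$) plus the lengths of the runs preceding the non-$q\Z$ labels (the factors sitting near the zeros of $m$) encountered along that continuation. The plan is to convert this uniform combinatorial bound into a uniform lower bound of the shape: a definite fraction $c=c(\mathcal D_\tau)>0$ of the mass at any node is carried, within a number of steps controlled by $\mathcal D_\tau$, onto a settled terminal descendant; iterating such a bound forces the unsettled mass to decay geometrically in $m$, giving $\lim_{N\to\infty}R_m^{(N)}\to0$. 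The difficulty is that single mask factors $|m|^2$ degenerate to $0$ near $\Z\setminus q\Z$, so the constant $c$ must be extracted uniformly over the node arguments $(\xi+\lambda)/b^k$; the gap penalty $\sum_{n_j\in B_\delta(\delta')}(n_j-n_{j-1}-1)$ in \eqref{Ddelta.def0} is precisely the bookkeeping that pairs each near-zero factor with enough neighbouring near-one factors to keep the product bounded below by a function of $\mathcal D_\tau$ alone. Grouping the factors along each branch according to $A_\delta(\delta')$ and $B_\delta(\delta')$, and reducing $\xi$ to a bounded range through the scaling relation $\widehat{\mu_{q,b}}(\xi)=\widehat{\mu_n}(\xi)\widehat{\mu_{q,b}}(\xi/b^n)$, is the route I would take to make this uniform estimate precise; once it is in place, combining $Q_\tau\ge1$ with the automatic $Q_\tau\le1$ finishes the proof.
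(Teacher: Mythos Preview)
Your plan is correct and matches the paper's approach: the paper also reduces to the Jorgensen--Pedersen criterion, proves the finite-level Parseval identity $\sum_{w\in\Sigma_q^n}|H_n(\xi+\Pi_{\tau,n}(w))|^2=1$ (its Lemma~\ref{A}), and then converts the hypothesis $\mathcal D_\tau<\infty$ into exactly the uniform lower bound you describe---from any node $\delta$ one reaches a $\tau$-main descendant $\delta\delta'$ with $|\widehat{\mu_{q,b}}(\xi+\Pi_{\tau,\infty}(\delta\delta'))|\ge r^{2\mathcal D_\tau+2}\,|H_M(\xi+\Pi_{\tau,M}(\delta))|$ on a fixed annulus $T_b$ (its Lemmas~\ref{lem3.5} and~\ref{Add}); the grouping of mask factors according to $A_\delta(\delta')$ and $B_\delta(\delta')$ that you outline is precisely how Lemma~\ref{lem3.5} is proved. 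The only cosmetic differences are that the paper packages the conclusion as a one-step contradiction (if $Q(\xi_0)<1$ then the unsettled mass $\ge 1-Q(\xi_0)-\varepsilon$ produces, via the uniform constant $r^{4\mathcal D_\tau+4}$, extra contributions to $Q(\xi_0)$ that force $Q(\xi_0)>Q(\xi_0)$) rather than your geometric-decay iteration, and that the restriction of $\xi$ to a bounded range you mention is made explicit as the set $T_b$ in \eqref{tb.def}.
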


We believe that the boundedness assumption on ${\mathcal D}_{\tau, \delta}, \delta\in \Sigma_q^\ast$,
 is a very weak sufficient condition for a maximal orthogonal set to be a spectrum.
In fact, as  shown in the next theorem, the above boundedness condition on  ${\mathcal D}_{\tau, \delta}, \delta\in \Sigma_q^\ast$,
 is close to be necessary.


\begin{theo}\label{main2.thm.old}
Let $2\le q, b/q\in \Z$,   $\tau: \Sigma_q^\ast\to \{-1, 0, \ldots, b-2\}$ be a maximal tree mapping.
 If there exists a positive number $\epsilon_0$ such that for each $n\ge1$ and $\delta=\delta_1\delta_2\cdots\delta_n\in \Sigma_q^n$ with $\delta_n\ne0$,
\begin{equation}\label{main2.thm.old.eq1}
{\mathcal D}_{\tau, \delta}\ge \epsilon_0 n,
\end{equation}
 then
$\Lambda(\tau)$ in \eqref{lambdatau.def} is not a  spectrum of the Cantor measure  $\mu_{q, b}$.
\end{theo}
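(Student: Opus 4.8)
The plan is to use the Jorgensen--Pedersen completeness criterion. Writing
\[
Q_\Lambda(\xi) := \sum_{\lambda\in\Lambda(\tau)} \big|\widehat{\mu}_{q,b}(\xi+\lambda)\big|^2,
\]
the maximal orthogonal set $\Lambda(\tau)$ is a spectrum if and only if $Q_\Lambda(\xi)\equiv 1$; since maximality forces $Q_\Lambda(\xi)\le 1$ everywhere, it suffices to exhibit a single point $\xi_0$ with $Q_\Lambda(\xi_0)<1$. First I would record the self-similar structure $\widehat{\mu}_{q,b}(\xi)=m(\xi)\,\widehat{\mu}_{q,b}(\xi/b)$ with mask $m(\xi)=\frac1q\sum_{i=0}^{q-1}e^{-2\pi i(i/q)\xi}$, together with the elementary facts $|m|\le1$, $|m(\eta)|=1\Leftrightarrow \eta\in q\Z$, $m(\eta)=0\Leftrightarrow\eta\in\Z\setminus q\Z$, the periodicity $m(\eta+q)=m(\eta)$, and the Parseval identity $\sum_{j=0}^{q-1}|m(\xi+j)|^2=1$.

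Second, I would organize $Q_\Lambda$ along the labeling tree. Grouping $\lambda$ by its first label $j=\delta_1$ and using $m(\xi+\lambda)=m(\xi+\tau(j))$ (valid because $b\Z\subset q\Z$) gives the recursion
\[
Q_\Lambda(\xi)=\sum_{j=0}^{q-1}\big|m(\xi+\tau(j))\big|^2\,Q_{\Lambda_j}\!\Big(\tfrac{\xi+\tau(j)}{b}\Big),
\]
where $\Lambda_j$ is the orthogonal set of the subtree rooted at the symbol $j$. Since the weights $|m(\xi+\tau(j))|^2$ sum to $1$, iterating this identity realizes $1-Q_\Lambda(\xi)$ as the limit of the total weight carried down branches that have not yet terminated into an all-label-zero ($\tau$-main) ending; that is, the deficit $1-Q_\Lambda(\xi)$ equals the escaping mass never captured by the finite expansions in $\Lambda(\tau)$. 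Thus the task becomes to find $\xi_0$ for which a definite proportion of mass keeps flowing down non-terminating branches.

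Third comes the heart of the argument, linking $\mathcal D_{\tau,\delta}$ to mask products. Along a branch the descended argument at level $k$ is $(\xi+\Pi_{\tau,k}(\delta))/b^k$, and each label contributes a factor $|m(\cdot)|^2$. I would show that $\mathcal D_{\tau,\delta}(\delta')$ is exactly the bookkeeping of those factors bounded away from $1$ before $\delta\delta'$ settles: the term $\#A_\delta(\delta')$ counts the nonzero labels, while the gap terms $\sum_{n_j\in B_\delta(\delta')}(n_j-n_{j-1}-1)$ account for the excursions of the descended point away from $q\Z$ produced by labels that are nonzero multiples of $q$. Using the hypothesis $\mathcal D_{\tau,\delta}\ge\epsilon_0 n$ for every nonzero-ending $\delta\in\Sigma_q^n$, this should yield an estimate of the form: from such a node, every $\tau$-main ending below it incurs at least $\epsilon_0 n$ weight-reducing factors, so the mass captured below $\delta$ is at most a fixed geometric amount and a uniformly positive fraction of the incoming weight continues to escape.

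Finally I would assemble these into a choice of $\xi_0$: selecting $\xi_0$, together with an infinite branch along which the descended points return near $q\Z$ infinitely often so that the surviving mask weight stays bounded below, and invoking the level-linear lower bound, I would conclude that the escaping mass at $\xi_0$ is strictly positive, whence $Q_\Lambda(\xi_0)<1$ and $\Lambda(\tau)$ is not a spectrum. The main obstacle I anticipate is precisely this last coupling: the descended point wanders (a nonzero multiple-of-$q$ label keeps $|m|$ near $1$ but displaces the argument away from $q\Z$), so I must control this dynamics well enough to guarantee that linear growth of $\mathcal D_{\tau,\delta}$ genuinely prevents capture --- that the $\ge\epsilon_0 n$ cost is realized as non-recoverable loss of captured mass rather than weight that is merely redistributed and recaptured deeper in the tree. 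Making that capture/escape inequality uniform over all branches is the crux on which the whole argument rests.
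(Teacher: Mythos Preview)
Your first three steps are on the right track and match the paper's setup. But your fourth step --- tracking escaping mass down a carefully chosen infinite branch and controlling the wandering of the descended point --- is the hard way, and the obstacle you name at the end is real. The paper avoids it completely by a direct level-by-level accounting that needs no single branch and no dynamics.

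The simplification you are missing is that for the necessary direction only the trivial continuation $\delta'=0^\infty$ matters. Every $\lambda\in\Lambda(\tau)$ equals $\Pi_{\tau,\infty}(\delta)$ for a unique $\delta\in\Sigma_q^n\setminus\Sigma_q^{n-1}$ (some $n\ge1$); for $m>n$ the labels $\tau(\delta|_m)=\tau(\delta\,0^{m-n})$ lie in $q\Z$ by the tree rule, so $B_\delta(0^\infty)=\varnothing$ and $\mathcal D_{\tau,\delta}(0^\infty)$ is simply the number of nonzero such labels, which is $\ge\mathcal D_{\tau,\delta}\ge\epsilon_0 n$. (Incidentally, your description of the gap terms in $\mathcal D_{\tau,\delta}(\delta')$ is backwards: they are attached to labels \emph{not} in $q\Z$, not to nonzero multiples of $q$; but this plays no role here precisely because $B=\varnothing$.) For $\xi$ in the annulus $T_b:=(-\tfrac1{b-1},\tfrac{b-2}{b-1})\setminus(-\tfrac1{b(b-1)},\tfrac{b-2}{b(b-1)})$, each nonzero $q\Z$-label sends the descended point back into $T_b$, so the next mask factor is at most a fixed $r_2<1$. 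Hence
\[
|\widehat{\mu_{q,b}}(\xi+\lambda)|\ \le\ r_2^{\,\epsilon_0 n}\,|H_n(\xi+\Pi_{\tau,n}(\delta))|.
\]
Squaring, summing over $\tau$-regular $\delta\in\Sigma_q^n\setminus\Sigma_q^{n-1}$, and using your Parseval identity $\sum_{\delta\in\Sigma_q^n}|H_n(\xi+\Pi_{\tau,n}(\delta))|^2=1$ together with $\sum_{\delta\in\Sigma_q^{n-1}}|H_n(\xi+\Pi_{\tau,n}(\delta))|^2\ge Q_{n-1}(\xi)$ gives
\[
1-Q_n(\xi)\ \ge\ \bigl(1-r_2^{\,2\epsilon_0 n}\bigr)\bigl(1-Q_{n-1}(\xi)\bigr).
\]
Iterate and let $n\to\infty$: $1-Q(\xi)\ge\prod_{n>N_0}(1-r_2^{\,2\epsilon_0 n})\cdot(1-Q_{N_0}(\xi))>0$ for every $\xi\in T_b$, since the infinite product converges and $Q_{N_0}(\xi)<1$ is a finite sum strictly dominated by the Parseval identity. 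That is the whole argument; your escaping-mass picture is morally correct, but the telescoping product realizes it globally without ever selecting a branch or analyzing the descent dynamics.
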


Finally we apply our completeness results  in Theorems \ref{main.thm} and \ref{main2.thm.old} to
 the rescaling-invariant problem when the
 rescaled set $K\Lambda$ is a spectrum of the Cantor measure $\mu_{q, b}$ if $\Lambda$ is.
This simple and natural way  to construct new spectra
  from  known ones
  is motivated from the conclusion that if $K=5^k$ for some $k\ge 1$,
  then
 the rescaled set
  $K\Lambda_4: =\{K\lambda:\ \lambda\in \Lambda_4\}$
 of  the  spectrum 
\begin{equation}\label{lambda4.def}
\Lambda_4:=\Big\{\sum_{j=0}^\infty d_j 4^j, d_j\in \{0, 1\}\Big\}\end{equation}
of the Bernoulli convolution $\mu_4$
 is also a spectrum
 \cite{JP, DuJ09, DHS13}.
In the next theorem, we show that if the maximal tree mapping $\tau$
associated with the spectrum $\Lambda$ satisfies the boundedness assumption \eqref{main.thm.eq1}, then
the integrally rescaled  set $K\Lambda$ is a spectrum of the Cantor measure $\mu_{q, b}$ if and only if  it is a maximal orthogonal set.


\begin{theo}\label{multiple.thm}
Let $2\le q, b/q\in \Z$,  $\tau: \Sigma_q^\ast\to \{-1, 0, \ldots, b-2\}$ be a maximal tree mapping
satisfying \eqref{main.thm.eq1}, and $\Lambda(\tau)$  be as in \eqref{lambdatau.def}.
Then  for any  integer $K$ being prime with $b$,
 $K\Lambda(\tau)$ is a spectrum of the Cantor measure $\mu_{q,b}$
 if and only if it is a maximal orthogonal set.
\end{theo}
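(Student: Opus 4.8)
The ``only if'' direction is immediate, since any spectrum is in particular a maximal orthogonal set. The whole content is therefore the ``if'' direction, and the plan is to reduce it to the sufficient condition of Theorem \ref{main.thm}. Assuming $K\Lambda(\tau)$ is a maximal orthogonal set, Theorem \ref{th1.6} produces a maximal tree mapping $\tau'$ with $\Lambda(\tau')=K\Lambda(\tau)$, and by Theorem \ref{main.thm} it suffices to establish the uniform bound $\mathcal{D}_{\tau'}=\sup_{\delta}\mathcal{D}_{\tau',\delta}<\infty$. Thus everything reduces to transporting the finiteness of $\mathcal{D}_\tau$ across multiplication by $K$.

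The bridge I would use is a residue correspondence between the two trees. A node $\delta\in\Sigma_q^n$ only records an element modulo $b^n$: if $\delta\delta'$ is a main branch then $\Pi_{\tau,\infty}(\delta\delta')\equiv\Pi_{\tau,n}(\delta)\pmod{b^n}$ by \eqref{formally}, while conversely an integer modulo $b^n$ determines its first $n$ digits in the digit set $\{-1,\dots,b-2\}$ (a complete residue system modulo $b$) and hence, through condition (ii) of Definition \ref{treemapping.def}, a unique length-$n$ branch. Since $\gcd(K,b)=1$, multiplication by $K$ is a bijection of $\Z/b^n\Z$, so it induces a map $\Phi_n$ sending a length-$n$ branch $\tilde\delta$ of $\tau$ to the length-$n$ branch $\delta$ of $\tau'$ recording $K\lambda\bmod b^n$, where $\lambda\in\Lambda(\tau)$ is any spectrum element through $\tilde\delta$; all such $\lambda$ share the residue $\Pi_{\tau,n}(\tilde\delta)\bmod b^n$, so $\delta$ is well defined, and $K\lambda\in\Lambda(\tau')$ guarantees it is a genuine $\tau'$-branch. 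Counting (both trees have $q^n$ nodes at level $n$) together with the maximality of $\tau'$ shows $\Phi_n$ is a bijection, so every $\tau'$-node arises as some $\Phi_n(\tilde\delta)$.

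Now fix a $\tau'$-node $\delta$ of length $n$, set $\tilde\delta=\Phi_n^{-1}(\delta)$, and use $\mathcal{D}_{\tau,\tilde\delta}\le\mathcal{D}_\tau=:C$ together with \eqref{Ddelta.delta2} to select a $\tau$-main subbranch $\tilde\delta\tilde\delta'$---that is, a spectrum element $\lambda\in\Lambda(\tau)$ through $\tilde\delta$---whose subbranch cost is at most $C$. Writing $\lambda=\Pi_{\tau,n}(\tilde\delta)+b^n h$, this says the integer $h$ has at most $C$ nonzero base-$b$ digits and gap term at most $C$ in the sense of \eqref{Ddelta.def0}. The element $\mu:=K\lambda\in\Lambda(\tau')$ satisfies $\mu\equiv K\Pi_{\tau,n}(\tilde\delta)\pmod{b^n}$, hence passes through $\delta$ and yields a $\tau'$-main subbranch $\delta\delta'$; writing $\mu=\Pi_{\tau',n}(\delta)+b^nH$ one has $H=e+Kh$ with $e$ a bounded integer (of absolute value at most $K+1$). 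It remains to bound the subbranch cost of $H$, namely $\mathcal{D}_{\tau',\delta}(\delta')$, by a constant depending only on $C,K,b$.

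This bounded-distortion estimate is the technical heart, and the step I expect to be the main obstacle. The key point is that multiplication by $K$ acts on $b$-ary digit strings (read from low to high) as a finite-state transducer: since the digits and $K$ are bounded, the carry stays in a bounded range, and over any run of zero input digits the carry settles back to $0$ within $O(\log_b K)$ steps. Moreover $\gcd(K,b)=1$ forces $Kd\not\equiv0\pmod b$ for every nonzero digit $d$, so each nonzero digit of $h$ contributes a nonzero output within a window of length $O(\log_b K)$ above its position and produces no output deep inside a zero run. Consequently the nonzero and the non-$q\Z$ digits of $H$ all lie within $O(\log_b K)$ of the nonzero positions of $h$: their number is $O((\log_b K)\,C)$, and---crucially for the gap term in \eqref{Ddelta.def0}---every non-$q\Z$ digit of $H$ has a nonzero digit of $H$ at most $O(\log_b K)$ positions below it, so each summand $n_j-n_{j-1}-1$ is $O(\log_b K)$. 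Adding the bounded correction $e$ disturbs only $O(1)$ low digits. Hence $\mathcal{D}_{\tau',\delta}(\delta')\le C'$ with $C'=C'(C,K,b)$ independent of $n$ and $\delta$, so $\mathcal{D}_{\tau'}\le C'<\infty$, and Theorem \ref{main.thm} shows $K\Lambda(\tau)=\Lambda(\tau')$ is a spectrum.
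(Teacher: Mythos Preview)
Your overall strategy coincides with the paper's: pass to the maximal tree mapping $\kappa$ of $K\Lambda(\tau)$, pair each $\kappa$-node $\delta\in\Sigma_q^n$ with a $\tau$-node $\tilde\delta\in\Sigma_q^n$ via the residue bijection modulo $b^n$, push a cheap $\tau$-main extension of $\tilde\delta$ through multiplication by $K$, and bound $\mathcal{D}_{\kappa,\delta}$ in terms of $\mathcal{D}_{\tau,\tilde\delta}$. The transducer picture (bounded carry, settling in $O(\log_b K)$ steps over zero runs) is exactly what underlies the paper's Claim~1, which confines all nonzero output positions to windows $[n_j,\,n_j+\lfloor\log_b K\rfloor+1]$ around the nonzero input positions $n_j$ (with $n_0=0$ absorbing your error term $e$).

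The gap-term estimate, however, contains a false step. You assert that every non-$q\Z$ digit of $H$ has a nonzero digit of $H$ at most $O(\log_b K)$ positions below it, so that each summand $m_l-m_{l-1}-1$ is $O(\log_b K)$. Take $q=2$, $b=4$, $K=3$, and $h=b^{0}+b^{99}$ (both digits equal $1\notin 2\Z$, input cost $100$). Then $Kh=3b^{0}+3b^{99}=(-1)b^{0}+1\cdot b^{1}+(-1)b^{99}+1\cdot b^{100}$ in the digit set $\{-1,0,1,2\}$; the non-$q\Z$ digit at position $100$ sits $97$ positions above the previous nonzero digit, not $O(\log_4 3)$. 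The point your argument misses is that a large input gap between $n_{j-1}$ and $n_j$ \emph{survives} in $H$ as a large output gap ending at position $n_j$. The paper's Claim~2 supplies the missing ingredient: when clusters are well separated, the lowest output digit of the cluster at $n_j$ sits exactly at $n_j$ and lies in $q\Z$ if and only if the input digit $c_{n_j}$ does (this uses $\gcd(K,q)=1$, forced by $q\mid b$ and $\gcd(K,b)=1$). Consequently every large output gap that ends at a non-$q\Z$ digit is matched to an input gap already charged to the input cost, yielding $\mathcal D_{\kappa,\delta}(\delta')\le(2\lfloor\log_b K\rfloor+4)\bigl(\mathcal D_{\tau,\tilde\delta}+1\bigr)$. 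With this correction your argument goes through along the paper's lines.
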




%

Applying Theorem \ref{multiple.thm}, we find all possible integers $K$ such that $K\Lambda_4$ are spectra of
 the  Bernoulli convolution $\mu_4$, c.f.  \cite{JP, DuJ09, DHS13}.

 \begin{coro}\label{cantor24.cor}
 Let $\Lambda_4$ be as in \eqref{lambda4.def} and $K\ge 3$ be an odd integer. Then $K\Lambda_4$ is a spectrum of the Bernoulli convolution $\mu_4$ if and only if
 there does not  exist a positive integer $N$ such that
 \begin{equation}
 K \sum_{j=1}^{N} d_j 4^{j-1}\in (4^N-1)\Z\backslash\{0\}
 \end{equation}
 for some $d_j\in \{0, 1\}, 1\le j\le N$.
 \end{coro}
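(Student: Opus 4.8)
The plan is to reduce the statement to a maximality question via Theorem \ref{multiple.thm}, and then to settle that maximality question by matching it with the stated divisibility (cycle) condition. Throughout I specialize to $q=2$, $b=4$, so that $Z_{2,4}=\{4^s a:\ s\ge0,\ a\ \text{odd}\}$ and, writing $v_2$ for the $2$-adic valuation, $Z_{2,4}\cup\{0\}=\{0\}\cup\{n\ne0:\ v_2(n)\ \text{even}\}$; by \eqref{orthogonalcharacterization} two integers $\lambda,\mu$ are orthogonal exactly when $\lambda=\mu$ or $v_2(\lambda-\mu)$ is even. First I would exhibit $\Lambda_4$ from \eqref{lambda4.def} as $\Lambda(\tau)$ for the maximal tree mapping $\tau(\delta_1\cdots\delta_n)=\delta_n$ (read the last symbol): by \eqref{lambdatau.def} its main branches produce exactly the integers with base-$4$ digits in $\{0,1\}$, i.e. $\Lambda_4$. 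The point of this choice is that it satisfies the boundedness hypothesis \eqref{main.thm.eq1}: appending zeros to any $\delta$ already yields a $\tau$-main subbranch whose continuation carries no nonzero label, so in Definition \ref{taudistance.definition} one gets $\mathcal{D}_{\tau,\delta}=0$ for every $\delta$ and hence $\mathcal{D}_\tau=0<\infty$.

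Next I would invoke Theorem \ref{multiple.thm}. Since $K\ge3$ is odd it is prime with $b=4$, and multiplication by an odd integer preserves $Z_{2,4}$, so $K\Lambda_4$ is automatically an orthogonal set by \eqref{orthogonalcharacterization}. With $\Lambda_4=\Lambda(\tau)$ and $\mathcal{D}_\tau<\infty$ in hand, Theorem \ref{multiple.thm} applies and yields that $K\Lambda_4$ is a spectrum of $\mu_4$ if and only if it is a maximal orthogonal set. Thus the corollary is equivalent to the purely arithmetic assertion that $K\Lambda_4$ is a maximal orthogonal set precisely when there is no $N\ge1$ and no $d_1,\dots,d_N\in\{0,1\}$ with $K\sum_{j=1}^N d_j 4^{j-1}\in(4^N-1)\Z\setminus\{0\}$.

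To settle this I would study the set $T:=\{\xi\in\Z:\ \xi-K\ell\in Z_{2,4}\cup\{0\}\ \text{for all}\ \ell\in\Lambda_4\}$ of integers orthogonal to every point of $K\Lambda_4$; since $K\Lambda_4\subseteq T$ always, \eqref{orthogonalcharacterization} and the definition of maximality give that $K\Lambda_4$ is a maximal orthogonal set if and only if $T=K\Lambda_4$. For one direction I would show that a solution of the divisibility condition produces a point of $T\setminus K\Lambda_4$: writing $K\sum_{j=1}^N d_j4^{j-1}=c(4^N-1)$ with $c\ne0$, the periodicity $4^N\equiv1$ lets one build an integer $\xi^\ast\notin K\Lambda_4$ (the analogue, for general $K$ and $N$, of the point $\xi=-1$ one adjoins to $3\Lambda_4$) for which $v_2(\xi^\ast-K\ell)$ is even for every $\ell\in\Lambda_4$, so $K\Lambda_4$ is not maximal. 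For the converse I would argue $2$-adically: the requirement $\xi\in T$ forces the base-$4$ expansion of $\xi$ to obey a finite-state (bounded-carry) recursion driven by the digits of $\Lambda_4$, so any $\xi\in T\setminus K\Lambda_4$ must be eventually periodic, and reading off one period returns integers $N\ge1$, $d_1,\dots,d_N\in\{0,1\}$ and $c\ne0$ with $K\sum_{j=1}^N d_j4^{j-1}=c(4^N-1)$; absence of the divisibility condition therefore forces $T=K\Lambda_4$ and maximality. Combining with the previous paragraph gives the stated equivalence.

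I expect the main obstacle to be this last paragraph: pinning down the two-way correspondence between the extra orthogonal integers $T\setminus K\Lambda_4$ and the periodic solutions of $K\ell\in(4^N-1)\Z\setminus\{0\}$. Concretely, the delicate points are the explicit construction of $\xi^\ast$ from the cycle data $(N,c)$ and, in the other direction, the control of the $2$-adic carries showing that membership in $T$ admits no aperiodic integer solution outside $K\Lambda_4$; the reduction in the first two paragraphs, by contrast, is routine once the bounded tree labeling of $\Lambda_4$ is recorded.
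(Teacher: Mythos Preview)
Your reduction in the first two paragraphs is exactly the paper's argument: exhibit $\Lambda_4=\Lambda(\tau_{2,4})$ with $\tau_{2,4}(\delta_1\cdots\delta_n)=\delta_n$, observe $\mathcal{D}_{\tau_{2,4},\delta}=0$ for all $\delta$, and apply Theorem~\ref{multiple.thm} to reduce ``spectrum'' to ``maximal orthogonal set''.

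Where you diverge is in the last paragraph. The paper does not argue maximality directly; it simply invokes Theorem~\ref{App.thm}, which you have apparently overlooked. Specialized to $q=2$, $b=4$, $\tau=\tau_{2,4}$, that theorem says $K\Lambda_4$ fails to be a maximal orthogonal set if and only if there exist $\delta\in\Sigma_2^\infty$, $M\ge0$ and $N\ge1$ such that $\{\tau_{2,4}(\delta|_n)\}_{n>M}$ is $N$-periodic and the repetend $W=\omega_1\cdots\omega_N$ satisfies $K\sum_{j=1}^N\omega_j4^{j-1}=i(4^N-1)$ for some $i\ne0$. Since $\tau_{2,4}(\delta|_n)=\delta_n$, the periodicity requirement is vacuous: for \emph{any} word $d_1\cdots d_N\in\{0,1\}^N$ one may take $M=0$ and $\delta=(d_1\cdots d_N)^\infty$. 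Hence the condition collapses precisely to the existence of $N\ge1$ and $d_j\in\{0,1\}$ with $K\sum_{j=1}^N d_j4^{j-1}\in(4^N-1)\Z\setminus\{0\}$, and the corollary follows in one line.

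Your proposed $2$-adic/finite-state argument is, in effect, a re-derivation of the special case of Theorem~\ref{App.thm} from scratch: the construction of $\xi^\ast$ from cycle data is the $(\Longleftarrow)$ direction there (the paper builds $\lambda_0=K\Pi_{\tau,M}(\delta)-ib^M$ and shows it is orthogonal to all of $K\Lambda$), and your ``bounded-carry recursion forces eventual periodicity'' is the $(\Longrightarrow)$ direction (the paper extracts a periodic tail of $\tau$-labels from any $\vartheta_0\in\Theta\setminus K\Lambda$). None of this is wrong in spirit, but it is unnecessary here and, as you yourself note, the details are not filled in. Replace the last paragraph by a citation of Theorem~\ref{App.thm} together with the one-line specialization above, and the ``main obstacle'' disappears.
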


Given a spectral set $\Lambda$ of the Cantor measure $\mu_{q, b}$,
 its irrational
rescaling set $r\Lambda$ (i.e., $r\not\in \Q$) is not an orthogonal set (and hence not a spectrum) by \eqref{integrallambda}.
The next question is when a rational rescaling set $r\Lambda$
is an orthogonal set, or a maximal orthogonal set, or a spectrum. A necessary condition is that
$r\Lambda\subset \Z$ by \eqref{integrallambda},
  but unlike integral rescaling discussed in Theorems \ref{multiple.thm} there are lots of interesting problems unsolved yet.
%
%
%
In this paper, we apply Theorems \ref{main.thm} and \ref{main2.thm.old} to
  construct a spectrum $\Lambda$ of the Cantor measure $\mu_{q, b}$  such that
the rescaled set $\Lambda/(b-1)$
 is its maximal orthogonal set but not its spectrum, see Theorem \ref{counterexample}.

\section{Maximal orthogonal sets and spectra: a sufficient condition}
\label{sufficient.section}

In this section, 
 we prove  Theorem \ref{main.thm}.
For that purpose, we need several technical lemmas
on spectra of the Cantor measure $\mu_{q, b}$, a crucial lower bound estimate for its Fourier transform
$\widehat \mu_{q, b}$,
and an identity for multi-channel conjugate quadrature filters.

%

\medskip

For 
 an orthogonal set $\Lambda$ of $L^2(\mu_{q, b})$ containing zero, let
 \begin{equation}\label{Q.def}
 Q_{\Lambda}(\xi):=\sum_{\lambda \in \Lambda} |\widehat{\mu_{q, b}}(\xi+\lambda)|^2.
 \end{equation}
Then $Q_{\Lambda}$ is a real analytic function on the real line with $Q_\Lambda(0)=1$, and
 $$Q_{\Lambda}(\xi)= \sum_{\lambda\in \Lambda}
  |\langle e_\lambda, e_{-\xi}\rangle|^2\le \|e_{-\lambda}\|^2= 1, \ \xi\in \R,$$
where the equality holds if $\Lambda$ is a spectrum.
 The converse is shown to be true in \cite{JP, DHL}. This provides a characterization for an orthogonal set
 of the Cantor measure $\mu_{q, b}$ to be its spectrum.

\begin{lem}\label{jp.lem} {\rm (\!\cite{JP, DHL})}\
Let  $2\le q,  b/q \in \Z$, 
and let  $Q_\Lambda(\xi)$  be defined by \eqref{Q.def}. 
Then  an orthogonal set $\Lambda$ of the Cantor measure $\mu_{q, b}$
 is a spectrum  if and only if
$Q_{\Lambda}(\xi)=1$ for all $\xi \in \R$.
\end{lem}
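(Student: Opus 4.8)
The plan is to read $Q_\Lambda(\xi)$ as the squared length of the orthogonal projection of the exponential $e_{-\xi}$ onto the closed span of $\{e_\lambda:\lambda\in\Lambda\}$, and then to exploit the fact that the exponentials $\{e_{-\xi}:\xi\in\R\}$ already span a dense subspace of $L^2(\mu_{q,b})$. Everything reduces to a projection identity together with one density input.

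First I would record the identity noted just above the statement, $Q_\Lambda(\xi)=\sum_{\lambda\in\Lambda}|\langle e_\lambda,e_{-\xi}\rangle|^2$, which holds because $\langle e_\lambda,e_{-\xi}\rangle=\widehat{\mu_{q,b}}(\xi+\lambda)$ and, $\mu_{q,b}$ being a probability measure, $\|e_{-\xi}\|^2=1$. Writing $P$ for the orthogonal projection of $L^2(\mu_{q,b})$ onto $V:=\overline{\mathrm{span}}\{e_\lambda:\lambda\in\Lambda\}$, orthonormality of $\{e_\lambda\}$ gives $\|Pe_{-\xi}\|^2=\sum_{\lambda\in\Lambda}|\langle e_{-\xi},e_\lambda\rangle|^2=Q_\Lambda(\xi)$. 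Bessel's inequality then recovers $Q_\Lambda(\xi)\le 1$, and since $\|e_{-\xi}\|^2=\|Pe_{-\xi}\|^2+\|(I-P)e_{-\xi}\|^2=1$, equality $Q_\Lambda(\xi)=1$ at a given $\xi$ holds precisely when $e_{-\xi}\in V$.

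The only-if direction is then immediate: if $\Lambda$ is a spectrum, $\{e_\lambda\}$ is an orthonormal basis, Parseval applies to $f=e_{-\xi}$, and $Q_\Lambda(\xi)=\|e_{-\xi}\|^2=1$ for every $\xi$. For the converse, assume $Q_\Lambda\equiv 1$; by the previous paragraph $e_{-\xi}\in V$ for all $\xi\in\R$, so $V$ contains the linear span $\mathcal{A}$ of $\{e_{-\xi}:\xi\in\R\}$. It then remains to show that $\mathcal{A}$ is dense in $L^2(\mu_{q,b})$, which forces $V=L^2(\mu_{q,b})$ and hence completeness of $\{e_\lambda\}$.

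The key step, and the one place where something must genuinely be verified, is this density. Since $\mu_{q,b}$ is supported on the compact attractor $K$ of the iterated function system $\{f_i\}$, $\mathcal{A}$ is a self-adjoint subalgebra of $C(K)$ (a product of two exponentials is again an exponential), it contains the constants (take $\xi=0$) and separates the points of $K\subset\R$, so Stone--Weierstrass gives $\overline{\mathcal{A}}=C(K)$ in the uniform norm; as $C(K)$ is dense in $L^2(\mu_{q,b})$ for the finite Borel measure $\mu_{q,b}$, density of $\mathcal{A}$ in $L^2(\mu_{q,b})$ follows. I expect no further difficulty, the argument being purely a projection-plus-density statement. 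Note that the real-analyticity of $Q_\Lambda$ coming from the Paley--Wiener entirety of $\widehat{\mu_{q,b}}$ is not needed for the lemma as stated, although it would permit weakening the hypothesis from all $\xi\in\R$ to $\xi$ ranging over a set with an accumulation point.
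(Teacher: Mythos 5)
Your proof is correct and complete. The paper does not actually prove this lemma --- it records the easy (Bessel/Parseval) direction in the remark preceding the statement and cites \cite{JP, DHL} for the converse --- and the argument you give (interpret $Q_\Lambda(\xi)$ as $\|Pe_{-\xi}\|^2$, deduce $e_{-\xi}\in V$ from $Q_\Lambda(\xi)=1$, then conclude completeness from Stone--Weierstrass density of the exponentials on the compact attractor) is essentially the standard proof from those references, so there is nothing to flag.
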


\medskip
For the Cantor measure $\mu_{q, b}$,
taking Fourier transform at both sides of the equation \eqref{eq1.1} leads to the following refinement equation in the Fourier domain:
 \begin{equation}\label{eq2.0}
\widehat{\mu_{q,b}}(\xi)=H_{q, b}(\xi/b)\cdot \widehat{\mu_{q,b}}(\xi/b),
\end{equation}
where
\begin{equation}
\label{hdb.def} H_{q, b}(\xi):=\frac{1}{q}\sum_{l=0}^{q-1}e^{-2\pi i lb \xi /q} 
\end{equation}
is  a  periodic function 
 with the properties that $H_{q, b}(0)=1$,
  \begin{equation}\label{hqb.property2} H_{q, b}(\xi)=0 \ {\rm if \ and \ only \ if} \ b\xi\in \Z\backslash q\Z,\end{equation}
  and
\begin{equation}\label{hqb.property3}
H_{q, b}^\prime(j/b)\ne 0\ {\rm  for\ all} \  j\in \Z.
\end{equation}
Applying \eqref{eq2.0} repeatedly 
  and then taking limit $m\to\infty$, we obtain an explicit expression
for the Fourier transform of the Cantor measure $\mu_{q,b}$:
\begin{equation}\label{eq2.c}
\widehat{\mu_{q,b}}(\xi) = H_m(\xi) \widehat{\mu_{q,b}}(\xi/b^m)= \prod_{j=1}^\infty H_{q,b}(\xi/b^j), \ \ m\ge 1,
\end{equation}
where
\begin{equation}\label {hm.def}
H_m(\xi):=\prod_{j=1}^{m} H_{q, b}(\xi/b^j),\ m\ge 1.
\end{equation}
Let  $2\le q,  b/q \in \Z$.  Define
\begin{equation}\label{r0r1.def} r_0:=\inf_{|\xi|\le (b-2)/(b-1)} |\widehat \mu_{q, b}(\xi)|
\ \ {\rm and}\  \ r_1:= \inf_{1\le j\le q-1} \inf_{|\xi|\le (b-2)/(b-1)} |\xi|^{-1} |H_{q, b}(\xi/b+j/b)|.
\end{equation}
Then it follows from \eqref{hqb.property2}, \eqref{hqb.property3} and
\eqref{eq2.c}  that
 both $r_0$ and $r_1$ are well-defined and positive,
\begin{equation}\label{r0r1.positive} r_0>0 \ \ {\rm and}\  \ r_1>0.
\end{equation}
Set
\begin{equation}\label{tb.def}
T_b= \Big(-\frac {1}{b-1}, \frac {b-2}{b-1} \Big ) \Big\backslash   \Big(-\frac {1}{b(b-1)}, \frac {b-2}{b(b-1)} \Big ).
\end{equation}
For any $m\ge 1$ and  $d_j\in \{-1, 0, \ldots, b-2\}, 1\le j\le m$, with $d_m\ne 0$,  one may verify that
\begin{equation}\label{tb.property} \Big(\xi+\sum_{j=1}^m d_j b^{j-1}\Big)b^{-m}\in T_b \ {\rm for \ all} \  \xi\in
\Big(-\frac {1}{b-1}, \frac {b-2}{b-1} \Big ).
\end{equation}
%

To prove Theorem \ref{main.thm}, we need the following two lemmas which are related to the lower bound estimates of
$|\widehat{\mu_{q,b}}(\xi+\lambda)|$ for  $\xi\in T_b$ and $\lambda\in \Z$.

\begin{lem}\label{lem3.5}
Let $2\le q,  b/q \in \Z$, $\mu_{q, b}$ be the Cantor measure in \eqref{eq1.1}, and
let $\lambda=\sum_{j=1}^K d_{n_j} b^{n_j-1}$ for some positive  integers $n_j, 1\le j\le K$, satisfying $0=:n_0< n_1<\ldots<n_K$, and
for some  $d_{n_j}, 1\le j\le K$, belonging to the set $\{-1, 1, 2, \ldots, b-2\}$. Then
\begin{equation} \label {lem3.5.eq1}
|\widehat{\mu_{q,b}}(\xi+\lambda)| \ge
  r_0^{K+1} \Big(\frac{r_1}{b(b-1)}\Big)^{\# B} b^{-\sum_{j\in  B} (n_j-n_{j-1}-1)},\ \xi\in T_b,
\end{equation}
where  $B=\{1\le j\le K: \ d_{n_j}\not\in   q\Z\}$
and $r_0, r_1$ are given in \eqref{r0r1.def}.
\end{lem}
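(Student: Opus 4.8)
The plan is to prove \eqref{lem3.5.eq1} by induction on the number $K$ of nonzero digits, peeling off the lowest one at each step. For the base case $K=0$ we have $\lambda=0$, and the claim reduces to $|\widehat{\mu_{q,b}}(\xi)|\ge r_0$, which holds because $T_b\subset\{|\xi|\le(b-2)/(b-1)\}$ and $r_0$ is defined in \eqref{r0r1.def} as the infimum of $|\widehat{\mu_{q,b}}|$ over that interval. For the inductive step I would set $\eta:=\xi+d_{n_1}b^{n_1-1}$ and apply the factorization \eqref{eq2.c} with $m=n_1$ to write $\widehat{\mu_{q,b}}(\xi+\lambda)=H_{n_1}(\xi+\lambda)\,\widehat{\mu_{q,b}}((\xi+\lambda)/b^{n_1})$. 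Since $b/q\in\Z$ the filter $H_{q,b}$ in \eqref{hdb.def} is $1$-periodic, so at every level $j\le n_1$ the higher digits $d_{n_2},\dots,d_{n_K}$ contribute only integer shifts and may be dropped; thus $H_{n_1}(\xi+\lambda)=H_{n_1}(\eta)$. Writing $\xi_1:=\eta/b^{n_1}$ and $\tilde\lambda:=\sum_{j=2}^K d_{n_j}b^{n_j-1-n_1}\in\Z$, property \eqref{tb.property} (with single nonzero digit $d_{n_1}$) gives $\xi_1\in T_b$, while $(\xi+\lambda)/b^{n_1}=\xi_1+\tilde\lambda$ is again of the required form but with $K-1$ nonzero digits at positions $0<n_2-n_1<\dots<n_K-n_1$. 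The induction hypothesis applied to $\widehat{\mu_{q,b}}(\xi_1+\tilde\lambda)$ then supplies the factor $r_0^{K}(r_1/(b(b-1)))^{\#\tilde B}b^{-\sum}$, and since each $n_j-n_{j-1}$ is unchanged by the shift, it remains only to extract one extra factor from $|H_{n_1}(\eta)|$.

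To bound $|H_{n_1}(\eta)|=\prod_{j=1}^{n_1}|H_{q,b}(\eta/b^j)|$ I would split off the critical top level $j=n_1$ from the lower ones. For $j<n_1$ the argument $\eta/b^j$ is congruent mod $1$ to $\xi/b^j$, so $\prod_{j=1}^{n_1-1}H_{q,b}(\eta/b^j)=\prod_{j=1}^{n_1-1}H_{q,b}(\xi/b^j)=\widehat{\mu_{q,b}}(\xi)/\widehat{\mu_{q,b}}(\xi/b^{n_1-1})$ by \eqref{eq2.c}; as $|\widehat{\mu_{q,b}}|\le1$ and $\xi\in T_b$, this product has modulus at least $r_0$. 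The remaining factor is $H_{q,b}(\xi_1)$, and here the dichotomy coming from the set $B$ enters. If $d_{n_1}\in q\Z$ (so $1\notin B$), then, using that $H_{q,b}$ has period $q/b$ by \eqref{hdb.def}, the digit is invisible and $H_{q,b}(\xi_1)=H_{q,b}(\xi/b^{n_1})$; reorganizing gives $|H_{n_1}(\eta)|=|H_{n_1}(\xi)|\ge|\widehat{\mu_{q,b}}(\xi)|\ge r_0$, exactly the extra $r_0$ needed to pass from $r_0^K$ to $r_0^{K+1}$.

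The delicate case, and the main obstacle, is $d_{n_1}\notin q\Z$ (so $1\in B$), where $H_{q,b}$ vanishes at the relevant point. Reducing $d_{n_1}$ modulo $q$ to some $s\in\{1,\dots,q-1\}$ (again legitimate by the $q/b$-periodicity) rewrites $\xi_1$ as $\zeta/b+s/b$ with $\zeta=\xi/b^{n_1-1}$, so that $H_{q,b}(\xi_1)$ sits near the zero $s/b$ guaranteed by \eqref{hqb.property2}. This is precisely where the linearized lower bound encoded in $r_1$ in \eqref{r0r1.def} is used: since $|\zeta|\le(b-2)/(b-1)$, one gets $|H_{q,b}(\xi_1)|\ge r_1|\zeta|$, and because $\xi\in T_b$ is bounded away from $0$ by $|\xi|\ge 1/(b(b-1))$ (this is the role of removing the inner interval in \eqref{tb.def}), we obtain $|\zeta|\ge b^{-(n_1-1)}/(b(b-1))$. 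Hence $|H_{q,b}(\xi_1)|\ge (r_1/(b(b-1)))\,b^{-(n_1-n_0-1)}$, and multiplying by the $r_0$ from the lower levels produces exactly the factor $r_0\,(r_1/(b(b-1)))\,b^{-(n_1-n_0-1)}$ that accounts for the index $1\in B$. Combining the two cases with the induction hypothesis reproduces \eqref{lem3.5.eq1}, the only quantitative inputs being the positivity \eqref{r0r1.positive} of $r_0,r_1$ and the nonvanishing of the derivative \eqref{hqb.property3} that makes $r_1>0$.
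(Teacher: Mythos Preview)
Your proposal is correct and follows essentially the same approach as the paper: the paper carries out the identical block-by-block estimate (bounding $\prod_{\ell=n_{i-1}+1}^{n_i}|H_{q,b}((\xi+\lambda)/b^\ell)|$ from below by $r_0$ when $d_{n_i}\in q\Z$ and by $r_0\,(r_1/(b(b-1)))\,b^{-(n_i-n_{i-1}-1)}$ otherwise, with one extra $r_0$ from the tail $\widehat{\mu_{q,b}}(\xi_K)$), only presented as a direct product decomposition rather than packaged as an induction on $K$. The auxiliary points you use---$1$-periodicity of $H_{q,b}$, the inclusion $\xi_1\in T_b$ via \eqref{tb.property}, and the lower bound $|\xi|\ge 1/(b(b-1))$ on $T_b$ feeding into the definition of $r_1$---are exactly those in the paper's argument.
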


\begin{proof}
For $0\le i\le K$, define $\xi_0=\xi$ and
$\xi_i=(\xi+\sum_{j=1}^{i} d_{n_j} b^{n_j-1})/b^{n_{i}}$ for $1\le i\le K$.
Then
\begin{equation}\label{xii.tb} \xi_i\in T_b\ \ \text{for all}\ \ 0\le i\le K
\end{equation}
by \eqref{tb.property}.
Observe that
\begin{equation}\label{Hqbproperty.one}
|H_{q, b}(\eta)|\le 1 \ \text{for  all} \ \eta\in \R\ \text{and}
\ \sup_{b\eta\in T_b} |H_{q, b}(\eta)|<1.
\end{equation}
The above observation, together with   \eqref{eq2.c}, \eqref{xii.tb}
 and the fact that
$H_{q,b}$ has period $q/b$, implies
\begin{eqnarray*}\nonumber
\prod_{\ell=n_{i-1}+1}^{n_{i}}  \big| H_{q, b}\big((\xi+\lambda)/b^{\ell}\big)\big|
& =   &
\prod_{\ell=n_{i-1}+1}^{n_{i}}\Big |H_{q, b}\Big(\Big (\xi+\sum_{j=1}^{i-1} d_{n_j} b^{n_j-1}+d_{n_i} b^{n_i-1} \Big)b^{-\ell}\Big)\Big|\\
& =   & \prod_{\ell'=1}^{n_{i}-n_{i-1}}
\big|H_{q, b}\big(\xi_{i-1}/b^{\ell'}\big)\big| \ge
 |\widehat{\mu_{q,b}}(\xi_{i-1})|
\ge r_0
\end{eqnarray*}
if  $d_{n_i}\in  q\Z$;
and
\begin{eqnarray*} & &
\prod_{\ell=n_{i-1}+1}^{n_{i}}  \big| H_{q, b}\big((\xi+\lambda)/b^{\ell}\big)\big|\nonumber\\
& =   &\Big(
\prod_{\ell=n_{i-1}+1}^{n_{i}-1}\Big |H_{q, b}\Big(\Big (\xi+\sum_{j=1}^{i-1} d_{n_j} b^{n_j-1}\Big)b^{-\ell}\Big)\Big|\Big)
\cdot \Big |H_{q, b}\Big(\Big (\xi+\sum_{j=1}^{i-1} d_{n_j} b^{n_j-1}+d_{n_i} b^{n_i-1}\Big)b^{-n_i}\Big)\Big|\nonumber
\\
& \ge &   |\widehat{\mu_{q,b}} \big (\xi_{i-1})| \cdot  \big | H_{q, b}(\xi_{i-1}/b^{n_i-n_{i-1}}+d_{n_i}/b)\big|
\ge   r_0 r_1 |\xi_{i-1}|/b^{n_i-n_{i-1}-1}\nonumber\\
& \ge &  r_0r_1 b^{-n_i+n_{i-1}}/(b-1)
\end{eqnarray*}
if $d_{n_i} \not\in q\Z$.
Combining the above two lower bound estimates with
\begin{equation}
\widehat{\mu_{q,b}}(\xi+\lambda) = 
\Big( \prod_{i=1}^K
 \prod_{\ell=n_{i-1}+1}^{n_i} H_{q, b} \big((\xi+\lambda)/b^\ell\big) \Big)
\cdot \widehat {\mu_{q, b}}\big( (\xi+\lambda)/b^{n_K}\big)
\end{equation}
proves  \eqref{lem3.5.eq1}.
\end{proof}

\begin{lem}\label{Add}
Let $2\le q, b/q\in \Z$, and
  $\tau:\Sigma_q^\ast\to\R$ be a maximal tree mapping satisfying \eqref{main.thm.eq1}. Then for each $\delta\in\Sigma_q^M$, $M>0$, there exists $\delta'\in\Sigma_q^\ast$ such that
\begin{equation}\label{lambdadelta.eq}
|\widehat{\mu_{q, b}}(\xi+\Pi_{\tau, \infty}(\delta\delta'))|\ge  r^{2{\mathcal D}_\tau+2} |H_{M}(\xi+\Pi_{\tau, M}(\delta))|, \ \xi\in T_b,
\end{equation}
where $r=\min\Big(r_0, \frac{1}{b}, \frac{r_1}{b(b-1)}\Big)$ and $r_0,r_1$ are defined in \eqref{r0r1.def}.
\end{lem}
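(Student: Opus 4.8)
The plan is to reduce the claim to a single uniform lower bound on the tail of the infinite product \eqref{eq2.c}, and then to estimate that tail by the block telescoping of Lemma \ref{lem3.5}. First I would use \eqref{Ddelta.delta2} to pick $\delta'\in\Sigma_q^\ast$ for which $\delta\delta'$ is a $\tau$-main branch realizing the infimum, so that $\mathcal{D}_{\tau,\delta}(\delta')=\mathcal{D}_{\tau,\delta}\le\mathcal{D}_\tau$; then $\lambda:=\Pi_{\tau,\infty}(\delta\delta')=\sum_{k\ge1}\tau((\delta\delta')|_k)b^{k-1}$ is a finite sum agreeing with $\Pi_{\tau,M}(\delta)$ through level $M$. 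Because $H_{q,b}$ has period $q/b$, and hence integer period $1$ since $b/q\in\Z$, for $1\le j\le M$ the difference $\big((\xi+\lambda)-(\xi+\Pi_{\tau,M}(\delta))\big)/b^j=\sum_{k>M}\tau((\delta\delta')|_k)b^{k-1-j}$ is an integer and leaves $H_{q,b}$ unchanged. Splitting \eqref{eq2.c} at level $M$ gives $\widehat{\mu_{q,b}}(\xi+\lambda)=H_M(\xi+\Pi_{\tau,M}(\delta))\cdot\widehat{\mu_{q,b}}\big((\xi+\lambda)/b^M\big)$, so it suffices to prove the uniform tail bound $|\widehat{\mu_{q,b}}((\xi+\lambda)/b^M)|\ge r^{2\mathcal{D}_\tau+2}$ for $\xi\in T_b$.

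Writing $\theta=(\xi+\lambda)/b^M=\zeta+\lambda_1$, where $\zeta=(\xi+\Pi_{\tau,M}(\delta))/b^M$ and $\lambda_1=\sum_{i=1}^{P}e_ib^{p_i-1}$ collects the tail digits $e_i\in\{-1,1,\dots,b-2\}$ at the relative positions $1\le p_1<\cdots<p_P$, I would estimate $|\widehat{\mu_{q,b}}(\theta)|$ block by block as in Lemma \ref{lem3.5}. The digit bounds $-1\le\tau(\delta|_k)\le b-2$ give $\zeta\in\big(-\frac{1}{b-1},\frac{b-2}{b-1}\big)$, exactly the range in \eqref{tb.property}; hence every iterate $\zeta_i=(\zeta+\sum_{l\le i}e_lb^{p_l-1})/b^{p_i}$ with $i\ge1$ lands in $T_b$, while $|\zeta|\le(b-2)/(b-1)$ gives $|\widehat{\mu_{q,b}}(\zeta)|\ge r_0$. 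Running the block estimates of Lemma \ref{lem3.5} and bounding $r_0,\frac{r_1}{b(b-1)},\frac1b$ below by $r$, the product is at least $r^{(P+1)+\#B+\sum_{i\in B}(p_i-p_{i-1}-1)}$ with $B=\{i:e_i\notin q\Z\}$ and $p_0=0$; this exponent equals $\mathcal{D}_{\tau,\delta}(\delta')+1+\#B$. Since $\#B\le P\le\mathcal{D}_{\tau,\delta}(\delta')=\mathcal{D}_{\tau,\delta}\le\mathcal{D}_\tau$, it is at most $2\mathcal{D}_\tau+1\le2\mathcal{D}_\tau+2$, and as $0<r<1$ the tail bound follows.

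The one delicate point, and the main obstacle, is the first block: when $\delta$ ends in a run of trailing zeros the base point $\zeta$ need not lie in $T_b$ but only in the larger interval above, so it may be as small as a negative power of $b$. If the first tail digit $e_1$ were not a multiple of $q$, the factor $|H_{q,b}(\zeta/b^{p_1}+e_1/b)|\ge r_1|\zeta|/b^{p_1-1}$ would then be too small and the estimate would break. The resolution uses condition (ii) of Definition \ref{treemapping.def}: at any position carrying the symbol $0$ the value of $\tau$ lies in $q\Z$, so along a pure-zero continuation the first nonzero tail value is automatically a nonzero multiple of $q$. Choosing $\delta'$ so that its first nonzero tail entry is such a $q$-multiple, the offending factor collapses by the $q/b$-periodicity of $H_{q,b}$ to $H_{q,b}(\zeta/b^{p_1})$, which is bounded below by $r_0$ regardless of the size of $\zeta$; from the second block onward every iterate already lies in $T_b$ and the previous paragraph applies. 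Verifying that this choice can be made while keeping $\mathcal{D}_{\tau,\delta}(\delta')$ within a bounded additive amount of $\mathcal{D}_{\tau,\delta}$, so that the exponent stays $\le2\mathcal{D}_\tau+2$, is precisely where the boundedness hypothesis \eqref{main.thm.eq1} is used, and I expect this bookkeeping to be the hardest part of the argument.
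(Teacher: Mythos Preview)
Your approach is essentially the paper's: split the infinite product at a level past $M$, absorb the first tail block into an $r_0$ factor via $|\widehat{\mu_{q,b}}(\zeta)|$, and feed the rest into Lemma~\ref{lem3.5}. You have correctly isolated the only obstacle (the base point $\zeta=(\xi+\Pi_{\tau,M}(\delta))/b^{M}$ need not lie in $T_b$) and the right cure (force the first nonzero tail digit into $q\Z$ by extending along a zero word).

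The ``hardest part'' you anticipate is actually easy, and the paper handles it without comparing $\mathcal{D}_{\tau,\delta}(\delta')$ to $\mathcal{D}_{\tau,\delta}$ at all. If $\delta$ is already a $\tau$-main branch, take $\delta'=0$ and use $|\widehat{\mu_{q,b}}(\zeta)|\ge r_0$. Otherwise, let $m\ge1$ be minimal with $\tau(\delta|_{M+m})\ne0$; since the last symbol of $\delta 0^m$ is $0$, that nonzero value lies in $q\Z$, and by \eqref{tb.property} the point $\eta_1=(\xi+\Pi_{\tau,M+m}(\delta0^m))/b^{M+m}$ does lie in $T_b$. Now pick $\delta''$ realizing $\mathcal{D}_{\tau,\delta0^m}$ and set $\delta'=0^m\delta''$. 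Splitting at level $M+m$ rather than $M$, the factors at levels $M+1,\dots,M+m$ collapse (by the $q/b$-period) to $\prod_{\ell=1}^{m}|H_{q,b}(\zeta/b^\ell)|\ge|\widehat{\mu_{q,b}}(\zeta)|\ge r_0$, and Lemma~\ref{lem3.5} applied at $\eta_1\in T_b$ gives a further factor $\ge r_0\,r^{\,2\mathcal{D}_{\tau,\delta0^m}(\delta'')}=r_0\,r^{\,2\mathcal{D}_{\tau,\delta0^m}}$. Since $\mathcal{D}_{\tau,\delta0^m}\le\mathcal{D}_\tau$ by \eqref{main.thm.eq1}, this yields $r_0^2\,r^{2\mathcal{D}_\tau}\ge r^{2\mathcal{D}_\tau+2}$. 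So there is no need to keep $\mathcal{D}_{\tau,\delta}(\delta')$ close to $\mathcal{D}_{\tau,\delta}$; the uniform bound on $\mathcal{D}_{\tau,\delta0^m}$ is what is used.
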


\begin{proof}
If $\delta$ is a $\tau$-main branch, we
set $\delta'=0$.
In this case,
\begin{eqnarray} \label{lambdadelta.case1.eq1}
|\widehat{\mu_{q,b}}(\xi+\Pi_{\tau, \infty}(\delta\delta'))|
 &  =  & |\widehat{\mu_{q,b}}(\xi+\Pi_{\tau, M}(\delta))| \nonumber
 \\ 
 &  =  &  |H_{M}(\xi+\Pi_{\tau, M}(\delta))|
   \cdot  \big|\widehat{\mu_{q,b}}  \big((\xi+\Pi_{\tau, M}(\delta))/b^{M} \big) \big| \nonumber\\ 
 & \ge & \Big(\inf_{\eta\in (-1/(b-1), (b-2)/(b-1))}|\widehat{\mu_{q,b}}(\eta)|\Big)\cdot  |H_{M}(\xi+\Pi_{\tau, M}(\delta))|\nonumber\\
& \ge & r_0 |H_{M}(\xi+\Pi_{\tau, M}(\delta))| , \ \xi\in T_b,
\end{eqnarray}
where the second equalities follows from\eqref{eq2.c}, while
the first inequality  holds as $b^{-M}(\xi+\Pi_{\tau, M}(\delta))\in \big(-1/(b-1), (b-2)/(b-1)\big)$ for all $\xi\in T_b$.

\smallskip
Now consider $\delta$ is not a $\tau$-main branch.
In this case, define
\begin{equation}\label{lambdadelta.case2}
\delta':= 0^m\delta'',
\end{equation}
where $m\ge 1$ is the smallest integer such that  $\tau(\delta|_{m+M})\ne 0$,
and $\delta''\in \Sigma_q^\ast$ is so chosen that the quantities
$\mathcal{D}_{\tau, \delta 0^m} (\delta'')$ in \eqref{Ddelta.def0}
and ${\mathcal D}_{\tau, \delta 0^m}$ in \eqref{Ddelta.delta} are the same,
\begin{equation} \label{lambdadelta.case2.eq1}
\mathcal{D}_{\tau, \delta 0^m} (\delta'')={\mathcal D}_{\tau, \delta 0^m}.
\end{equation}
Let   $\eta_1= (\xi+\Pi_{\tau, M+m}(\delta 0^m))/b^{M+m}$ and $\eta_2=(\xi+\Pi_{\tau, M}(\delta))/b^{M}$ for $\xi\in T_b$.
Then
\begin{equation}\label{eta.def}  \eta_1\in T_b \ \  \text{and} \ \ \eta_2\in \Big(-\frac{1}{b-1}, \frac{b-2}{b-1}\Big)
\end{equation} by \eqref{tb.property} and $\tau(\delta 0^m)=\tau(\delta|_{m+M})\ne 0$.
Write
  $$\big(\Pi_{\tau,\infty}(\delta 0^m\delta'')-\Pi_{\tau,M+m}(\delta 0^m\delta'')\big)/b^{M+m}=\sum_{j=1}^K d_{n_j} b^{n_j-1}$$ for some
   integers $n_j, 1\le j\le K$, satisfying $1\le n_1<n_2<\ldots<n_K$
   and some $d_{n_j}\in \{-1, 1, 2, \ldots, b-2\}, 1\le j\le K$.
Therefore
\begin{eqnarray} \label{lambdadelta.case2.eq2}
& & \big|\widehat{\mu_{q, b}}\big(\xi+\Pi_{\tau, \infty}(\delta\delta')\big)\big| \nonumber\\
& = &
|H_{M}(\xi+\Pi_{\tau, \infty}(\delta\delta'))| \cdot \Big| \prod_{l=M+1}^{M+m} H_{q, b}\big((\xi+\Pi_{\tau, \infty}(\delta\delta'))/b^l\big)\Big|\nonumber\\
& & \quad \cdot
\big|\widehat {\mu_{q, b}}\big( (\xi+\Pi_{\tau, \infty}(\delta\delta'))/b^{M+m}\big)\big|\nonumber\\
& = & |H_{M}(\xi+\Pi_{\tau, M}(\delta))| \cdot
\Big| \prod_{l=1}^{m} H_{q, b}(\eta_2/b^l)\Big| \cdot \Big|\widehat {\mu_{q, b}}\Big( \eta_1+\sum_{j=1}^K d_j b^{n_j-1}\Big)\Big|\nonumber\\
\qquad & \ge & r_0 r^{2 \mathcal{D}_{\tau, \delta 0^m} (\delta'')}  |\widehat {\mu_{q, b}}(\eta_2)| \cdot |H_{M}(\xi+\Pi_{\tau, M}(\delta))|
\ge r_0^2 r^{2{\mathcal D}_{\tau, \delta 0^m}}  \big|H_{M}(\xi+\Pi_{\tau, M}(\delta))\big|,
\end{eqnarray}
where the first inequality follows from \eqref{eq2.c}, \eqref{Hqbproperty.one} and Lemma \ref{lem3.5}.
Combining \eqref{lambdadelta.case1.eq1} and \eqref{lambdadelta.case2.eq2} proves \eqref{lambdadelta.eq}.
\end{proof}

\medskip

Observe that   $H_{q, b}(\xi)$ 
in \eqref{hdb.def}
  satisfies
\begin{equation}\label{mqb.identity}
\sum_{j=0}^{q-1} |H_{q, b}(\xi+ j/b)|^2=1.
\end{equation}
To prove Theorem \ref{main.thm}, we  need a similar  identity for
$H_m(\xi), 
m\ge 1$, 
with shifts in $\Pi_{\tau, m}(\Sigma_q^m)$.

\begin{lem}\label{A}
Let $2\le q, b/q\in \Z$,
  $\tau:\Sigma_q^\ast\to\R$ be a tree mapping, and let
$H_m(\xi), m\ge 1$, be as in \eqref{hm.def}.  Then
\begin{eqnarray}\label{AM}
\sum_{\delta \in \Sigma_q^m} |H_m\big(\xi+\Pi_{\tau, m}(\delta)\big)|^2=1, \ \xi\in \R.
\end{eqnarray}
\end{lem}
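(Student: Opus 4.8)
The plan is to argue by induction on $m$, peeling off the last symbol of each branch so that the sum factors through the single-scale identity \eqref{mqb.identity}. The two structural facts I will exploit are: first, the factorization $H_{m+1}(\eta)=H_m(\eta)\,H_{q,b}(\eta/b^{m+1})$ coming directly from \eqref{hm.def}; and second, the recursive splitting of the labels, namely for $\delta=\sigma\delta_{m+1}$ with $\sigma\in\Sigma_q^m$ and $\delta_{m+1}\in\Sigma_q$ one has $\Pi_{\tau,m+1}(\delta)=\Pi_{\tau,m}(\sigma)+\tau(\delta)b^m$, since $\delta|_k=\sigma|_k$ for $k\le m$.

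The periodicity of $H_{q,b}$ is what makes the two pieces collapse, and keeping track of which period applies to which shift is the only delicate point. Because $b/q\in\Z$, the function $H_{q,b}$ in \eqref{hdb.def} is $1$-periodic, and it also has period $q/b$. In the first factor $H_m(\xi+\Pi_{\tau,m+1}(\delta))$, the extra term $\tau(\delta)b^m$ enters the $j$-th factor ($1\le j\le m$) as the integer shift $\tau(\delta)b^{m-j}$; by $1$-periodicity these shifts disappear, giving $H_m(\xi+\Pi_{\tau,m+1}(\delta))=H_m(\xi+\Pi_{\tau,m}(\sigma))$, which no longer depends on $\delta_{m+1}$. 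In the last factor the same term contributes $\tau(\delta)/b$; writing $\tau(\delta)=\delta_{m+1}+qs$ (condition (ii) of Definition \ref{treemapping.def}) gives $\tau(\delta)/b=\delta_{m+1}/b+s(q/b)$, and now $q/b$-periodicity removes the integer multiple $s(q/b)$, leaving $H_{q,b}\big((\xi+\Pi_{\tau,m}(\sigma))/b^{m+1}+\delta_{m+1}/b\big)$.

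Putting these together yields the clean factorization
\begin{equation*}
\big|H_{m+1}(\xi+\Pi_{\tau,m+1}(\sigma\delta_{m+1}))\big|^2=\big|H_m(\xi+\Pi_{\tau,m}(\sigma))\big|^2\cdot\Big|H_{q,b}\Big(\tfrac{\xi+\Pi_{\tau,m}(\sigma)}{b^{m+1}}+\tfrac{\delta_{m+1}}{b}\Big)\Big|^2.
\end{equation*}
Summing first over $\delta_{m+1}\in\{0,\dots,q-1\}$ and applying \eqref{mqb.identity} (with $\xi$ replaced by $(\xi+\Pi_{\tau,m}(\sigma))/b^{m+1}$) reduces the inner sum to $1$, and then summing over $\sigma\in\Sigma_q^m$ and invoking the induction hypothesis gives $1$. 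The base case $m=1$ is \eqref{mqb.identity} itself after the same $q/b$-periodic reduction of $\tau(\delta_1)$ to $\delta_1$, so the induction closes. I expect no serious obstacle beyond the bookkeeping just described; the one point to state carefully is that $\Pi_{\tau,m}(\sigma)$ depends only on the first $m$ symbols, so that the double sum genuinely separates into an outer sum over $\sigma$ and an inner sum over the freely ranging last symbol $\delta_{m+1}$.
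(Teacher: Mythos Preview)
Your proposal is correct and follows essentially the same inductive argument as the paper's proof: both peel off the last symbol, use the factorization $H_{m+1}(\xi)=H_m(\xi)H_{q,b}(\xi/b^{m+1})$, reduce $\tau(\sigma\delta_{m+1})$ to $\delta_{m+1}$ modulo $q$ via the $q/b$-periodicity of $H_{q,b}$, and then apply \eqref{mqb.identity} followed by the inductive hypothesis. The only cosmetic difference is that the paper records the periodicity as ``$H_k$ has period $b^{k-1}q$'' whereas you work factor-by-factor using the $1$-periodicity of $H_{q,b}$ (which follows from $q/b\mid 1$); these are equivalent.
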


\begin{proof}
For $m=1$,
\begin{equation*}
 \sum_{\delta \in \Sigma_q^m} |H_m(\xi+\Pi_{\tau, m}(\delta))|^2 =
  \sum_{j=0}^{q-1} |H_{q, b}(\xi/b+\tau(j)/b)|^2=
\sum_{j=0}^{q-1} |H_{q, b}(\xi/b+j/b)|^2=1,
\end{equation*}
where the last equality follows from \eqref{mqb.identity},
and the second  one holds as $H_{q, b}$ has period $q/b$
and  $\tau(j)-j\in q\Z, 0\le j\le q-1$, by the tree mapping property for $\tau$. This proves
\eqref{AM} for $m=1$.

Inductively we assume that \eqref{AM} hold for all $m\le k$. Then for $m=k+1$,
\begin{eqnarray*}
& & \sum_{\delta \in \Sigma_q^m} |H_m(\xi+\Pi_{\tau, m}(\delta))|^2\\
 & = &
 \sum_{\delta'\in \Sigma_q^k}\sum_{j=0}^{q-1}
 |H_k(\xi+\Pi_{\tau, k+1}(\delta'j))|^2
  \cdot \big|H_{q, b}\big(\xi/b^{k+1}+\Pi_{\tau, k+1}(\delta'j)/b^{k+1}\big)\big|^2\\
 & = &
 \sum_{\delta'\in \Sigma_q^k}\sum_{j=0}^{q-1}
 |H_k(\xi+\Pi_{\tau, k}(\delta'))|^2 \cdot
  \big|H_{q, b}\big(\xi/b^{k+1}+\Pi_{\tau, k}(\delta')/b^{k+1}+j/b\big)\big|^2
 =1,
  \end{eqnarray*}
where the first equality holds as $H_{k+1}(\xi)=H_k(\xi) H_{q, b}(\xi/b^{k+1})$,
the second one  follows from the observations that $H_k$ and $H_{q, b}$ are periodic functions with period $b^{k-1}q$ and $q/b$ respectively and
that $$\Pi_{\tau, k+1}(\delta'j)=
\Pi_{\tau, k}(\delta')+\tau(\delta'j) b^k\in
\Pi_{\tau, k}(\delta')+j b^k+qb^k \Z, \ 0\le j\le q-1,$$
by the tree mapping property for $\tau$, and  the last one is true by \eqref{mqb.identity} and the inductive hypothesis.
This completes the inductive proof.
\end{proof}

We have  all ingredients for the proof of Theorem \ref{main.thm}.

\begin{proof}[Proof of Theorem \ref{main.thm}]
 Let $Q(\xi):=Q_{\Lambda}(\xi)$ be the function in \eqref{Q.def}  associated with
 the maximal orthogonal set $\Lambda:=\Lambda(\tau)$
 of $L^2(\mu_{q, b})$.
As $Q$ is an analytic function on the real line, the spectral property
 for the maximal orthogonal set $\Lambda$ reduces to proving
$Q(\xi)\equiv 1$ for all $\xi\in T_b$ by  Lemma \ref{jp.lem}.
Suppose, on the contrary, there exists $\xi_0 \in T_b$ such that
\begin{equation} \label {qxi0lessone}
Q(\xi_0)<1.
\end{equation}

For $n\ge 1$, set
\begin{equation} \label {main3}
\Lambda_n:=\big\{\Pi_{\tau, \infty}(\delta): \ \delta \in \Sigma_q^n \ \text{such that} \  \tau {\rm \ is\ regular\ on\ } \delta \big\},
\end{equation}
and define
\begin{equation} \label {main4}
Q_{n}(\xi):=\sum_{\lambda \in \Lambda_{n}}|\widehat{\mu_{q,b}}(\xi+\lambda)|^2, \ \xi\in \R.
\end{equation}
Then
$$  \lim_{n \rightarrow \infty} \Lambda_n =\Lambda\ \  \text{and}\ \  \Lambda_n \subset \Lambda_{n+1}\ \text{for all} \ n\ge 1,$$
since $\Lambda=\Lambda(\tau)$  
 and
$\Sigma_q^\ast=\cup_{n=1}^\infty \Sigma_q^n$.
This implies that $Q_n(\xi), n\ge 1$, is an increasing sequence that converges to $Q(\xi)$, i.e.,
\begin{equation}\label{limitqnq}
\lim_{n\to \infty} Q_n(\xi)=Q(\xi), \ \xi\in \R.
\end{equation}
Thus for  sufficiently small $\epsilon>0$ chosen later,
there exists an integer $N$ such that
\begin{equation} \label {main4.1}
Q(\xi_0)-\varepsilon\le  Q_{N}(\xi_0)\le Q_n(\xi_0)\le Q(\xi_0)<1 \ \text{for  all} \ n\ge N.
\end{equation}

For any $\delta \in \Sigma_q^n$ being $\tau$-regular,
\begin{equation} \label {main4.2}
\lim_{m\rightarrow\infty} H_m(\xi+\Pi_{\tau, m}(\delta))= \lim_{m\rightarrow\infty} H_m(\xi+\Pi_{\tau, \infty}(\delta))= \widehat{\mu_{q,b}}(\xi+\Pi_{\tau, \infty}(\delta)), \ \xi\in \R.
\end{equation}
For any $\delta \in \Sigma_q^n$  such that  $\delta$ is not $\tau$-regular,
the set $\{m\ge n+1: \tau(\delta|_m)\ne 0\}$ contains infinite many integers.
Denote that set by $\{m_j, j\ge 1\}$ for some strictly increasing sequence $\{m_j\}_{j=1}^\infty$.
Recall that
\begin{equation}\label{taumj.property}
\tau(\delta|_{m_j})\in q\Z\cap \{-1, 1, 2,  \ldots, b-2\} \ \text{for all} \ j\ge 1
\end{equation}
 by the  tree mapping property for $\tau$.
Therefore
for $m_j\le m<m_{j+1}$ with $j\ge 1$,
\begin{eqnarray}\label{hmirregular.est}
  |H_m(\xi+\Pi_{\tau, m}(\delta))|  & \le & | H_{m_j}(\xi+\Pi_{\tau, m}(\delta))\big |
 =| H_{m_j}(\xi+\Pi_{\tau, m_j}(\delta))\big |\nonumber\\
 & \le & \prod_{k=1}^{j-1} \big | H_{q,b}\big((\xi+\Pi_{\tau, m_j}(\delta))/b^{m_k+1}\big)\big |   \nonumber\\
&  =  & \prod_{k=1}^{j-1} \big | H_{q, b}\big((\xi+\Pi_{\tau, m_k}(\delta))/b^{m_k+1}\big)\big |
\nonumber \\
 &\le & \Big (\sup_{b\eta\in T_b} |H_{q, b}(\eta)|\Big)^{j-1} ,  \ \xi\in T_b,
\end{eqnarray}
where the  inequalities  follow from \eqref{tb.property}, \eqref{Hqbproperty.one} and \eqref{taumj.property}, and  the  equalities hold by
 the tree mapping property $\tau$
 and the $q/b$ periodicity of the filter $H_{q,b}$. 
Combining \eqref{Hqbproperty.one} and \eqref{hmirregular.est} proves that
\begin{equation}\label {main4.3}
\lim_{m\to \infty} |H_m(\xi+\Pi_{\tau, m}(\delta))|=0,\  \xi\in T_b
\end{equation}
if $\delta\in \Sigma_q^n$ is not $\tau$-regular.

Applying \eqref{main4.2} and \eqref{main4.3} with $n$ and $\xi$ replaced by $N$ and $\xi_0$
respectively, we can find a sufficient large integer $M\ge N+1$
 such that
\begin{equation}\label{n1.eq1}
\sum_{\delta \in \Sigma_q^{N}} | H_{M}(\xi_0+\Pi_{\tau, M}(\delta))|^2 \le
\sum_{\lambda \in \Lambda_{N}}|\widehat{\mu_{q,b}}(\xi_0+\lambda)|^2 + \varepsilon \le Q(\xi_0)+\varepsilon .
\end{equation}
This together with Lemma \ref{A} implies that
\begin{equation} \label {main5}
\sum_{\delta \in \Sigma_q^{M}\backslash \Sigma_q^{N}}
 | H_{M}(\xi_0+\Pi_{\tau, M}(\delta))|^2 >1- Q(\xi_0)-\varepsilon>0,
\end{equation}
where
$$\Sigma_q^{M}\backslash \Sigma_q^{N}=\big\{\delta\in \Sigma_q^{M}: \   \delta|_N0^\infty\ne \delta0^\infty\big\}.$$

Now, for each $\delta \in \Sigma_q^{M}\backslash \Sigma_q^{N}$, let $\lambda(\delta)=\Pi_{\tau, \infty}(\delta\delta')$ with $\delta'$ selected as in Lemma \ref {Add}. Observe that  $\lambda(\delta)-\Pi_{\tau, M}(\delta)\in b^M\Z$ for all $\delta\in \Sigma_q^M\backslash \Sigma_q^N$.
This implies that $\lambda(\delta_1)\ne \lambda(\delta_2)$ for two distinct $\delta_1, \delta_2\in \Sigma_q^M\backslash \Sigma_q^N$.
Therefore
\begin{eqnarray} \nonumber
Q(\xi_0)=\sum_{\lambda \in \Lambda}|\widehat{\mu_{q,b}}(\xi_0+\lambda)|^2
& \ge & \sum_{\lambda \in \Lambda_{N}}|\widehat{\mu_{q,b}}(\xi_0+\lambda)|^2 + \sum_{ \delta \in \Sigma_q^{M}\backslash \Sigma_q^{N}}|\widehat{\mu_{q,b}}(\xi_0+\lambda(\delta))|^2  \\ \nonumber
& \ge & Q(\xi_0)-\varepsilon+  r^{4{\mathcal D}_\tau+4}\sum_{ \delta \in \Sigma_q^{M}\backslash \Sigma_q^{N}}|H_M(\xi+\Pi_{\tau, M}(\delta))|^2 \\ \nonumber
& \ge & Q(\xi_0)-\varepsilon+  r^{4{\mathcal D}_\tau+4} (1- Q(\xi_0)-\varepsilon),
\end{eqnarray}
where the second inequality follows from \eqref{main4.1} and Lemma \ref{Add}, and
the last holds by \eqref{main5}. This contradicts to \eqref {qxi0lessone} by letting $\varepsilon$ chosen sufficiently small.
\end{proof}

\section{Maximal orthogonal sets and spectra: a necessary condition}
\label{necesarry.section}

Given a tree mapping $\tau$,  define
\begin{equation}
\label{Nn.def}
{\mathcal N}_{\tau}(n):=
\left\{\begin{array}{ll}
\inf_{0\ne \delta\in \Sigma_q}
{\mathcal D}_{\tau, \delta} (0^\infty) & {\rm if} \ n=1\\
\inf_{\delta\in \Sigma_q^n\backslash \Sigma_q^{n-1}}
{\mathcal D}_{\tau, \delta} (0^\infty) & {\rm if}\    n\ge 2,
\end{array}\right. \end{equation}
where $\Sigma_q^n\backslash \Sigma_q^{n-1}:=\{\delta'j: \ \delta'\in \Sigma_q^{n-1}, 1\le j\le q-1\}$.
In this section, 
 we   establish the following strong version of
  Theorem \ref{main2.thm.old}.

\begin{theo}\label{main2.newthm}
Let $2\le q, b/q\in \Z$, $\tau: \Sigma_q^\ast\to \{-1, 0, \ldots, b-2\}$ be a maximal tree mapping, and
let ${\mathcal N}_\tau(n), n\ge 1$, be as in \eqref{Nn.def}. Set
$$r_2:=\max\{|H_{q, b}(\xi)|: 1/b\le b(b-1) |\xi|\le b-2\}.$$
 If $\sum_{n=1}^\infty r_2^{2\mathcal{N}_\tau(n)}<\infty$, then $\Lambda(\tau)$  in \eqref{lambdatau.def} is not a spectrum of $L^2(\mu_{q, b})$.
\end{theo}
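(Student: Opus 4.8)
The plan is to argue through the spectral criterion of Lemma \ref{jp.lem}: I will exhibit a single point $\xi_0 \in T_b$ at which $Q_{\Lambda(\tau)}(\xi_0) < 1$, which by that lemma forbids $\Lambda(\tau)$ from being a spectrum. The organizing device is the orthogonality identity \eqref{AM}. For fixed $\xi_0$ it says that the weights $P_n(\delta) := |H_n(\xi_0 + \Pi_{\tau,n}(\delta))|^2$, $\delta \in \Sigma_q^n$, sum to $1$ and are consistent under refinement (this is precisely the induction step in the proof of Lemma \ref{A}), so they assemble into a probability distribution on branches. In this language $Q_{\Lambda(\tau)}(\xi_0)$ is the total weight carried by the $\tau$-main branches, since $\lim_m H_m = \widehat{\mu_{q,b}}$ identifies the atom of a main branch with $|\widehat{\mu_{q,b}}(\xi_0+\lambda)|^2$; and $1 - Q_{\Lambda(\tau)}(\xi_0)$ equals the weight $S_\infty := \lim_n S_n$ that is never captured, where $S_n := \sum_{\delta \text{ active at level } n} P_n(\delta)$ decreases in $n$. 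Thus it suffices to show $S_\infty(\xi_0) > 0$.

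First I would prove the upper-bound companion of Lemma \ref{lem3.5}: for $\xi \in T_b$ and a $\tau$-main branch $\delta$ carrying $\lambda = \Pi_{\tau,\infty}(\delta)$, one has $|\widehat{\mu_{q,b}}(\xi+\lambda)| \le C\, r_2^{\mathcal{D}_{\tau,\delta}(0^\infty)}$ for a constant $C = C(q,b)$. The mechanism mirrors Lemma \ref{lem3.5} factor by factor, but now $r_2$ serves as the uniform \emph{upper} bound for $|H_{q,b}|$ on the annulus $\{1/b \le b(b-1)|\xi| \le b-2\}$, which avoids the points where $|H_{q,b}|=1$, so $r_2 < 1$. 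Each nonzero value $\tau(\delta|_{n_i})$ forces one factor $|H_{q,b}|\le r_2$ at the scale immediately past $n_i$, where the argument lands in that annulus; this accounts for the term $\#A_\delta(0^\infty)$ of \eqref{Ddelta.def0}. Each gap preceding a value outside $q\Z$ forces additional decay through the linear vanishing of $H_{q,b}$ at its zeros (using $b^{-1}\le r_2$ to convert the $b^{-(n_i - n_{i-1}-1)}$ of Lemma \ref{lem3.5} into $r_2^{\,n_i-n_{i-1}-1}$), accounting for $\sum_{B}(n_i-n_{i-1}-1)$. Summing the exponents reproduces $\mathcal{D}_{\tau,\delta}(0^\infty)$ exactly.

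The crux is then to pass from this per-branch bound to a bound on the weight captured at each level \emph{without} paying for the possibly exponentially many main branches of a given length. Writing $E_n(\xi_0)$ for the weight of the main branches whose last nonzero label sits at level $n$, I would split each such branch through its level-$(n-1)$ prefix, estimate the final settling factor by the dual bound of the previous paragraph, and collapse the sum over prefixes using the identity \eqref{AM}, equivalently bounding the sum of prefix weights by the active weight $S_{n-1}(\xi_0)$. Since every competing branch satisfies $\mathcal{D}_{\tau,\delta}(0^\infty)\ge \mathcal{N}_\tau(n)$ by the definition \eqref{Nn.def}, this should yield a \emph{conditional} escape estimate $E_n(\xi_0)\le C\, r_2^{2\mathcal{N}_\tau(n)}\,S_{n-1}(\xi_0)$. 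I expect this reconciliation to be the main technical obstacle: one must charge the full cost $\mathcal{N}_\tau(n)$ rather than only the last block, which requires the orthogonality normalization to have already absorbed the earlier decay into the surviving prefix weights, and this is the point where the infimum in \eqref{Nn.def} (rather than a supremum) is essential.

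Finally, the conditional estimate gives $S_n = S_{n-1}(1 - p_n)$ with $p_n := E_n/S_{n-1} \le C\, r_2^{2\mathcal{N}_\tau(n)}$, hence $S_\infty(\xi_0) = S_0(\xi_0)\prod_{n\ge 1}(1 - p_n)$. The hypothesis $\sum_n r_2^{2\mathcal{N}_\tau(n)} < \infty$ makes $\sum_n p_n < \infty$, so the infinite product is strictly positive; and any $\xi_0 \in T_b$ with $\xi_0 \ne 0$ has $S_0(\xi_0) = 1 - |\widehat{\mu_{q,b}}(\xi_0)|^2 > 0$, because $\widehat{\mu_{q,b}}$ attains the value $1$ only at the origin. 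Therefore $1 - Q_{\Lambda(\tau)}(\xi_0) = S_\infty(\xi_0) > 0$, and Lemma \ref{jp.lem} shows $\Lambda(\tau)$ is not a spectrum. It remains to handle the bookkeeping for main branches whose last nonzero label is a nonzero multiple of $q$ (so the last \emph{digit} vanishes); these are absorbed by applying the same estimate at the appropriate earlier level and do not disturb the summable conditional escape.
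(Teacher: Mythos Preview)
Your outline is essentially the paper's proof: the same recursion $1-Q_n(\xi)\ge(1-r_2^{2\mathcal N_\tau(n)})(1-Q_{n-1}(\xi))$ obtained by combining a per-branch upper bound with the identity \eqref{AM}, then an infinite product. However, the key estimate is stated imprecisely in a way that would cause trouble if taken literally, and you anticipate a difficulty that does not in fact arise.

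First, the ``upper-bound companion'' must be \emph{relative}, not absolute, and it applies to a level-$n$ truncation, not to a $\tau$-main branch. Precisely: for $\delta\in\Sigma_q^n\setminus\Sigma_q^{n-1}$ that is $\tau$-regular and $\xi\in T_b$,
\[
|\widehat{\mu_{q,b}}(\xi+\Pi_{\tau,\infty}(\delta))|\le r_2^{\,\mathcal D_{\tau,\delta}(0^\infty)}\,|H_n(\xi+\Pi_{\tau,n}(\delta))|.
\]
As you wrote it, with $\delta$ a $\tau$-main branch, one has $\mathcal D_{\tau,\delta}(0^\infty)=0$ and the bound is vacuous. The factor $|H_n(\xi+\Pi_{\tau,n}(\delta))|$ is exactly the ``prefix weight'' you later want to sum via \eqref{AM}, so it must be retained in the estimate rather than absorbed into a constant $C$.

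Second, the gap-term discussion and the claim $b^{-1}\le r_2$ are unnecessary and the latter is not obviously true. The point is that for $\delta\in\Sigma_q^n\setminus\Sigma_q^{n-1}$ the extension is by zeros, so for every $m>n$ the last digit of $\delta|_m$ is $0$ and hence $\tau(\delta|_m)\in q\Z$. Thus $B_\delta(0^\infty)=\emptyset$, $\mathcal D_{\tau,\delta}(0^\infty)$ is simply the \emph{count} $K=\#\{m>n:\tau(\delta|_m)\ne 0\}$, and each such $m$ contributes exactly one factor $|H_{q,b}(\eta/b)|\le r_2$ with $\eta\in T_b$ (by \eqref{tb.property} and periodicity). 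No linear-vanishing argument is needed, the constant is $1$ rather than $C$, and the ``reconciliation'' you flag as the main obstacle disappears: the full exponent $K\ge\mathcal N_\tau(n)$ sits cleanly outside the prefix factor.

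With these corrections the recursion goes through with constant $1$; one starts the product at an $N_0$ with $\mathcal N_\tau(n)\ge 1$ for $n\ge N_0$ (guaranteed by the convergence hypothesis) and checks $Q_{N_0}(\xi)<1$ for $\xi\in T_b$ directly from \eqref{AM} and the strict inequality in \eqref{Hqbproperty.one}. Your last paragraph's ``bookkeeping'' case does not arise once the decomposition is by last nonzero \emph{digit} rather than last nonzero \emph{label}.
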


For a maximal tree mapping $\tau$ satisfying \eqref{main2.thm.old.eq1},
$$\sum_{n=1}^\infty r_2^{2\mathcal{N}_\tau(n)}\le \sum_{n=1}^\infty r_2^{2\epsilon_0 n}<\infty,$$
where the last inequality holds as $|H_{q, b}(\xi)|<1$ if $b\xi\notin q\Z$.
This together with Theorem \ref{main2.newthm} proves Theorem \ref{main2.thm.old}.
Now it remains to prove Theorem \ref{main2.newthm}.

\begin{proof}  [Proof of Theorem \ref{main2.newthm}]
Let $N_0\ge 2$ be so chosen that ${\mathcal N}_\tau(n)\ge 1$ for all $n\ge N_0$. The existence follows the series convergence assumption on ${\mathcal N}_\tau(n), n\ge 1$.
Take   $\delta \in \Sigma_q^{n} \backslash \Sigma_q^{n-1}$
being $\tau$-regular, where $n
\ge N_0$. Write
$$\{m\ge n+1: \ \tau(\delta|_m)\ne 0\}=\{n_k: 1\le k\le K\}$$
for some integers $n< n_1<n_2<\ldots <n_{K}$, where $K\ge {\mathcal N}_\tau(n)$.
Therefore for $\xi \in T_b$,
  \begin{eqnarray}\label{main2.3}
|\widehat{\mu_{q,b}}(\xi+\Pi_{\tau, \infty}(\delta))|
& = & |H_n(\xi+\Pi_{\tau, \infty}(\delta))| \cdot \big|\widehat{\mu_{q,b}}\big((\xi+\Pi_{\tau, \infty}(\delta))/b^n\big)\big|\nonumber\\
& \le &   |H_n(\xi+\Pi_{\tau, n}(\delta))| \cdot \prod_{k=1}^{K}
\big|H_{q, b}\big((\xi+\Pi_{\tau, n_{k}}(\delta))/b^{-n_k-1}\big)\big| \nonumber\\
&\le  &  \Big(\sup_{\eta\in T_b} |H_{q, b}(\eta/b)|\Big)^K \cdot
 |H_n(\xi+\Pi_{\tau, n}(\delta))|\nonumber\\
& \le & r_2^{{\mathcal N}_\tau(n)}
 |H_n(\xi+\Pi_{\tau, n}(\delta))|,
\end{eqnarray}
where the first equality  holds by \eqref{eq2.c};
 the first inequality follows from \eqref{eq2.c}, \eqref{Hqbproperty.one} and
 $\tau(\delta|_{n_{k}})\in q\Z, 1\le k\le K$, by the tree mapping property for $\tau$;
the second inequality is true since $(\xi+\Pi_{\tau, n_{k}}(\delta))/b^{-n_k}\in T_b$ by \eqref{tb.property};
and the last inequality  follows from the definition of the quality ${\mathcal N}_\tau(n)$.

Let $\Lambda_n$ and $Q_n, n\ge 1$, be as in \eqref{main3} and \eqref{main4} respectively,
and set $\Lambda_0=\{0\}$ and $Q_0(\xi)=|\widehat{\mu_{q,b}}(\xi)|^2$.
Then for $n\ge 1$ and  $\xi\in T_b$,
\begin{eqnarray}\label{necessary.prof.main}  
1-Q_{n}(\xi)
&  =  & 1-Q_{n-1} (\xi)-\sum_{\delta \in \Sigma_q^{n}\backslash\Sigma_q^{n-1}\ \text{is $\tau$-regular}}|\widehat{\mu_{q,b}}(\xi+\Pi_{\tau, \infty}(\delta))|^2 \\ \nonumber
& \ge & 1-Q_{n-1} (\xi)- r_2^{2 \mathcal{N}_{\tau}(n)}  \sum_{\delta \in \Sigma_q^{n}\backslash\Sigma_q^{n-1}}| H_{n}(\xi+\Pi_{\tau, n}(\delta))|^2 \\ \nonumber
& \ge & 1-Q_{n-1} (\xi)- r_2^{2 \mathcal{N}_{\tau}(n)}
\Big(1- \sum_{\lambda\in \Lambda_{n-1}}| \widehat{\mu_{q,b}}(\xi+\lambda)|^2 \Big) \\ \nonumber
&  =  & (1- r_2^{2 \mathcal{N}_{\tau}(n)} )\cdot\big(1-Q_{n-1}(\xi)\big),
\end{eqnarray}
where the first equality holds because $$\Lambda_{n}\backslash\Lambda_{n-1}=
\big\{\Pi_{\tau, \infty}(\delta): \ \delta\in \Sigma_q^n\backslash \Sigma_q^{n-1} \ \text{is $\tau$-regular}\big\};$$
the first inequality  is true by \eqref{main2.3};  and the second inequality  follows from
Lemma \ref{A} and
\begin{equation*}
\sum_{\lambda\in \Lambda_{n-1}}| \widehat{\mu_{q,b}}(\xi+\lambda)|^2 \le \sum_{\delta\in \Sigma_q^{n-1}}| H_{n}(\xi+\Pi_{\tau, n}(\delta))|^2, \ \ \xi\in \R,
\end{equation*}
by \eqref{eq2.c} and \eqref{Hqbproperty.one}.
Recall that $\lim_{n\to \infty} Q_n(\xi)=Q(\xi), \xi\in \R$, by \eqref{limitqnq}.
Applying \eqref{necessary.prof.main} repeatedly and using the convergence of $\sum_{n=1}^\infty r_2^{2{\mathcal N}_\tau(n)}$ gives
\begin{equation}\label{almostfornecessary}
1-Q(\xi)\ge \Big(\prod_{n=N_0+1}^\infty \big(1-r_2^{2 {\mathcal N}_\tau(n)}\big)\Big)\cdot (1-Q_{N_0}(\xi)),\  \  \xi\in T_b.
\end{equation}
On the other hand,
$$Q_{N_0}(\xi)=\sum_{\lambda\in \Lambda_{N_0}}
|\widehat{\mu_{q,b}}(\xi+\lambda)|^2<\sum_{\delta\in \Sigma_q^{N_0}} |H_{N_0}(\xi+\Pi_{\tau, {N_0}}(\delta))|^2=1, \ \ \xi\in T_b$$
by \eqref{eq2.c}, \eqref{Hqbproperty.one} and \eqref{AM}.
This together with \eqref{almostfornecessary} proves that
$Q(\xi)<1$ for all $\xi\in T_b$, and hence $\Lambda=\Lambda(\tau)$ is a not a spectrum for $L^2(\mu_{q, b})$ by
Lemma \ref{jp.lem}.
\end{proof}

\section{Spectra rescaling}
\label{rescaledspectra.section}

In this section, we first prove Theorem \ref{multiple.thm} in Subsection \ref{multiple.thm.proofsection}.
We then consider  verification of  maximal orthogonality of the rescaled set $K\Lambda$ in Subsection \ref{rescaledmos.subsection}.
In that subsection, we
show that the  rescaled set $K\Lambda$ is not a maximal orthogonal set of the Cantor measure
$\mu_{q, b}$ if and only if  the labeling tree $\tau(\Sigma_q^\ast)$ has certain periodic properties \eqref{app.periodic.eq} and \eqref{app0.1}.

\begin{theo}\label{App.thm}
Let $2\le q, b/q\in \Z$,  $\tau: \Sigma_q^\ast\to \{-1, 0, \ldots, b-2\}$ be a maximal tree mapping,
 $\Lambda:=\Lambda(\tau)$  be as in \eqref{lambdatau.def}, and  let $K>1$ be an integer coprime with $b$.
Then $K\Lambda$ is not a  maximal orthogonal set of the Cantor measure $\mu_{q,b}$ if and only if
there exist $\delta \in \Sigma_q^\infty$ and a nonnegative integer $M$  such that
$\{\tau(\delta|_n)\}_{n=M+1}^\infty$ is a periodic sequence with positive period $N$, i.e.,
\begin{equation}\label{app.periodic.eq}
\tau(\delta|_n)=\tau (\delta|_{n+N}), \ n\ge M+1, \end{equation}
and that
the word
 $W=\omega_1\omega_2\cdots\omega_{N}$ defined by
 \begin{equation}\label{omegaword.def} \omega_j=\tau(\delta|_{M+j}), 1\le j\le N,
\end{equation}
is a {\em repetend} of the recurring $b$-band decimal expression of $i/K$ for some $i\in \Z\backslash \{0\}$, i.e.,
\begin{equation}\label{app0.1}
\frac{i}{K}=0.\omega_N\cdots \omega_2\omega_1\omega_N\cdots \omega_2\omega_1\omega_N\cdots= \sum_{n=1}^\infty \sum_{j=1}^{N}\omega_jb^{j-Nn-1}=\frac{\sum_{j=1}^{N}\omega_jb^{j-1}}{b^N-1}.
\end{equation}
\end{theo}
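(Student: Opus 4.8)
The plan is to reduce the maximality question to a concrete condition on $b$-adic digits. Since $\gcd(K,b)=1$ and $q\mid b$ forces $\gcd(K,q)=1$, one checks that $K\,Z_{q,b}\subset Z_{q,b}$, so by \eqref{orthogonalcharacterization} the set $K\Lambda$ is automatically an orthogonal set. By \eqref{orthogonalcharacterization} and the normalization \eqref{integrallambda}, $K\Lambda$ fails to be maximal exactly when there is an integer $\gamma\notin K\Lambda$ with $\gamma-K\lambda\in Z_{q,b}$ for every $\lambda\in\Lambda$. I would translate this into the digits of $\theta:=\gamma/K$. Work with the digit set $D=\{-1,0,\ldots,b-2\}$, a complete residue system modulo $b$ containing $0$: every $b$-adic integer in $\mathbb Z_b$ has a unique expansion $\sum_{n\ge1}c_nb^{n-1}$ with $c_n\in D$, an ordinary integer is exactly one with a finite such expansion (this integrality$\Leftrightarrow$finiteness step needs $|P|<b^N-1$, which holds because $b\ge 2q\ge4$), and a rational with denominator coprime to $b$ has an eventually periodic expansion. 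For two such expansions the $b$-adic valuation of the difference is $m-1$, where $m$ is the first index with $c_m\ne c_m'$, and the leading coefficient is $\equiv c_m-c_m'\pmod b$; since $Z_{q,b}=\{b^ja:\ j\ge0,\ q\nmid a\}$, the difference lies in $Z_{q,b}$ iff $q\nmid(c_m-c_m')$.

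The mechanism I would isolate next is a compatibility lemma between $\tau$ and symbols. Because $\tau(\delta|_n)\equiv\delta_n\pmod q$ and $\tau(\delta|_n)$ depends only on the prefix $\delta|_n$, for any two branches the first index at which their label sequences $\tau(\cdot|_n)$ differ coincides with the first index at which their symbols differ, and there the symbols are distinct elements of $\Sigma_q$, hence the labels differ modulo $q$. Consequently, if $\delta$ is any branch with labels $c_n=\tau(\delta|_n)$ and $\eta$ is any $\tau$-regular branch with $\lambda=\Pi_{\tau,\infty}(\eta)\in\Lambda$, then whenever $\theta\ne\lambda$ the leading coefficient of $\theta-\lambda$ is coprime to $q$, so $\gamma-K\lambda=K(\theta-\lambda)\in Z_{q,b}$. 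This is precisely what makes a point arising from a tree branch orthogonal to all of $K\Lambda$.

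For sufficiency, given the periodic branch $\delta$ and repetend $W$ satisfying \eqref{app.periodic.eq}–\eqref{app0.1}, set $\theta:=\Pi_{\tau,\infty}(\delta)$ read $b$-adically. Summing the geometric tail gives $\theta=A-b^MP/(b^N-1)=(AK-b^Mi)/K$ with $A=\Pi_{\tau,M}(\delta)\in\Z$ and $P=\sum_{j=1}^N\omega_jb^{j-1}$, using $P/(b^N-1)=i/K$. Hence $\gamma:=K\theta=AK-b^Mi\in\Z$. Since $i\ne0$ gives $P\ne0$ and $|P|<b^N-1$, the tail is genuinely recurring, so $\theta$ has an infinite $D$-expansion and $\theta\notin\Z$; therefore $\gamma\notin K\Z\supset K\Lambda$. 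By the mechanism above, $\gamma-K\lambda\in Z_{q,b}$ for all $\lambda\in\Lambda$, so $K\Lambda\cup\{\gamma\}$ is orthogonal and strictly larger, and $K\Lambda$ is not maximal.

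For necessity, given an extension point $\gamma$, put $\theta=\gamma/K\in\mathbb Z_b$. It is rational with denominator coprime to $b$, so its $D$-expansion $(c_n)$ is eventually periodic; define $\delta_n:=c_n\bmod q$ and read off $M$, $N$, and $\omega_j$. The main obstacle — and the heart of the argument — is to show $\tau(\delta|_n)=c_n$ for all $n$. I would argue by contradiction at the least index $n_0$ where they differ: using maximality property (iii) choose a $\tau$-regular branch $\eta$ extending the prefix $\delta|_{n_0}$, so $\lambda=\Pi_{\tau,\infty}(\eta)\in\Lambda$ has labels equal to $(c_n)$ for $n<n_0$ and equal to $\tau(\delta|_{n_0})\ne c_{n_0}$ at $n_0$, while the symbols at $n_0$ coincide; then the first digit-difference between $\theta$ and $\lambda$ occurs at $n_0$ with equal symbols, forcing $\gamma-K\lambda\notin Z_{q,b}$, a contradiction. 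Thus $\delta$ is a branch of $\tau$ and \eqref{app.periodic.eq}, \eqref{omegaword.def} hold. Finally, rearranging the tail identity gives $i:=(AK-\gamma)/b^M=KP/(b^N-1)\in\Z$ (the division is legitimate because $v_b(A-\theta)\ge M$ and $\gcd(K,b)=1$), which is \eqref{app0.1}; and $i=0$ would force $\theta\in\Z\setminus\Lambda$ with $\theta-\lambda\in Z_{q,b}$ for all $\lambda$, contradicting the maximality of $\Lambda$ itself, so $i\ne0$. I expect the delicate points to be exactly this "digits follow the tree" step together with the supporting $b$-adic facts (uniqueness of the $D$-expansion, the integrality$\Leftrightarrow$finiteness equivalence requiring $b\ge4$, and eventual periodicity of rationals), all of which are standard but must be assembled with care.
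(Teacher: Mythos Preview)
Your proof is correct and takes a genuinely different route from the paper's. The paper argues each direction by hand in real arithmetic: for sufficiency it sets $\lambda_0=K\Pi_{\tau,M}(\delta)-ib^M$ and, for every $\lambda\in\Lambda$, manufactures auxiliary elements $\lambda_n\in\Lambda$ with $\lambda_n\equiv\Pi_{\tau,M+Nn}(\delta)\pmod{b^{M+Nn}}$ so that the known orthogonality $\lambda-\lambda_n\in Z_{q,b}$ can be transferred to $\lambda_0-K\lambda$; for necessity it proves a bijection claim (Claim~3) relating $\Pi_{\kappa,n}(\Sigma_q^n)$ and $K\Pi_{\tau,n}(\Sigma_q^n)$ modulo $b^n$, excludes $\vartheta_0\in K\Z$ (Claim~4), and then builds $\delta$ by a recurring-decimal argument with a complementary word $W'$ and a terminal case split. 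Your approach replaces all of this with a single $b$-adic picture: the $D$-expansion of $\theta=\gamma/K\in\Z_b$, together with the compatibility lemma that the first index at which two label sequences $\tau(\cdot|_n)$ differ is exactly the first index at which the underlying symbols differ (forcing the leading residue of the difference to be a unit mod $q$). This lemma does the work of both the paper's auxiliary $\lambda_n$ and its Claim~3, and the ``digits follow the tree'' step is the clean counterpart of the paper's $W'$ construction. What your approach buys is brevity and a uniform mechanism for both directions; what the paper's approach buys is that it stays entirely within ordinary integer arithmetic, avoiding any appeal to $\Z_b$. The one place where you must be explicit is that your integrality$\Leftrightarrow$finiteness step really uses $b\ge 2q\ge4$ (so that $|P|<b^N-1$); you flag this, and it is exactly the standing hypothesis $q\ge2$, $b/q\in\Z$, $q<b$.
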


By Theorems \ref{multiple.thm} and \ref{App.thm}, we see that
 the rescaled set $K\Lambda$ is a spectrum if and only if  the labeling tree of $\Lambda$ contains no \emph{repetend} of $K$.

For the  spectrum $\Lambda_4$
 of the Bernoulli convolution $\mu_4$
 in \eqref{lambda4.def}, the associated maximal tree mapping $\tau_{2, 4}$
on $\Sigma_2^\ast$ is given by
\begin{equation}\label{tau24.def}
\tau_{2, 4}(\delta)=\delta_n \ \ {\rm for }\ \ \delta=\delta_1\cdots \delta_n\in \Sigma_2^n,\ n\ge 1.
\end{equation}
Thus
$\mathcal{D}_{\tau_{2, 4}, \delta}=0$  for  all $\delta\in \Sigma_2^\ast$, and
the requirement \eqref{main.thm.eq1} is satisfied for the maximal tree mapping $\tau_{2, 4}$.
Hence
Corollary \ref{cantor24.cor} follows immediately from
Theorem \ref{multiple.thm} and \ref{App.thm}. 

Finally in Subsection \ref{example.section}, we construct a spectrum $\Lambda$
 of the Cantor measure $\mu_{q, b}$
 such that $\Lambda/(b-1)$, a seemingly denser set than the spectrum $\Lambda$,  is
 its maximal orthogonal set but not
 its spectrum.

\begin{theo}\label{counterexample}
Consider $2\le q, b/q\in \Z$ and $b>4$.  Define a tree mapping $\kappa: \Sigma_q^{\ast} \rightarrow \{-1,0,1,\ldots, b-2\}$ by
 \begin{equation}\label{counterexample.eq1}
  \kappa(\delta|_{k+1})=\left\{\begin{array}{ll}0 & {\rm if} \ \delta=0\ {\rm and} \ k\ge 0\\
  \delta & {\rm if} \ 1\le \delta\le q-1\ {\rm and} \ k=0\\
 q & {\rm if} \ 1\le \delta\le q-1\ {\rm and} \ k\in \{1, 2, \cdots, K_\delta, 2b\}\\
 0 & {\rm if} \ 1\le \delta \le q-1\ {\rm and} \ K_\delta< k\ne 2b
 \end{array}\right. \quad {\rm if} \ \ \delta\in \Sigma_q^1,
 \end{equation}
 where $0\le K_\delta\le b-2$ is the unique integer
 such that $q(K_\delta+1)+\delta\in (b-1)\Z$; and inductively
 \begin{equation} \label{counterexample.eq2}
 \kappa(\delta|_{k+n}) =\left\{\begin{array}{ll} j & {\rm if} \ k=0\\
 q & {\rm if} \ k\in \{1, 2, \ldots, K_\delta, n+2b-1\}\\
 0 & {\rm if} \ k>K_\delta \ {\rm and}\ k\ne n+2b-1
 \end{array}\right.
  \end{equation}
 if $\delta=\delta'j$ for some $\delta'\in \Sigma_q^{n-1}, n\ge 2$ and $j\in \{1, \ldots, q-1\}$, where $K_\delta\in \{0, 1, \ldots, b-2\}$
 is the unique integer such that
 \begin{equation} \label{counterexample.eq3}
\Big (\sum_{i=1}^{n-1}\kappa(\delta|_i)+q(K_\delta+1)+j\Big)\in (b-1)\Z. \end{equation}
Then \begin{equation}\label{lambda210.def}
\Lambda_{q, b}:=\big\{\Pi_{\kappa, \infty}(\delta): \ \delta\in \Sigma_q^\ast\big\}
\end{equation}
is a spectrum of the Cantor measure $\mu_{q, b}$,
and the rationally rescaled set $\Lambda_{q, b}/(b-1)$
is its maximal orthogonal set
   but not its spectrum.
\end{theo}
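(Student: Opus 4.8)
The plan is to establish the three assertions separately, invoking Theorem \ref{main.thm} for the spectral property of $\Lambda_{q,b}$, Theorem \ref{th1.6} for the maximal orthogonality of $\Lambda_{q,b}/(b-1)$, and Theorem \ref{main2.thm.old} for the failure of its completeness. First I would verify that $\kappa$ is a maximal tree mapping. Property (i) is the first line of \eqref{counterexample.eq1}; property (ii) holds since $\kappa(\delta|_n)=\delta_n$ at every level $n$ with $\delta_n\ne 0$, while $\kappa(\delta|_n)\in\{0,q\}\subset q\Z$ when $\delta_n=0$; and property (iii) follows because, for any $\delta$ with last nonzero symbol at level $m'$, the all-zero continuation $\delta 0^\infty$ carries a single isolated label $q$ at level $2m'+2b-1$ and vanishes thereafter, so $\delta$ is $\kappa$-regular. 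This same description bounds $\mathcal{D}_{\kappa,\delta}$: following that continuation to a $\kappa$-main branch, the nonzero tail labels number at most $b-1$ (at most $b-2$ consecutive copies of $q$ and the one isolated $q$), all lying in $q\Z$, so by \eqref{Ddelta.def0} we get $\mathcal{D}_{\kappa,\delta}\le b-1$ for every $\delta$. Hence $\mathcal{D}_\kappa\le b-1<\infty$ and Theorem \ref{main.thm} shows $\Lambda_{q,b}=\Lambda(\kappa)$ is a spectrum of $\mu_{q,b}$.

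Next I would prove $\Lambda_{q,b}\subset(b-1)\Z$. Since $b\equiv 1\pmod{b-1}$, one has $\Pi_{\kappa,\infty}(\delta)\equiv\sum_m\kappa(\delta|_m)\pmod{b-1}$, so it suffices to show the label sum is divisible by $b-1$. I would track the partial sums $T_n=\sum_{i=1}^{n}\kappa(\delta|_i)$ modulo $b-1$ and exploit that $K_\delta$ is chosen \emph{adaptively}: \eqref{counterexample.eq3} matches $K_\delta$ against the \emph{actual} partial sum $\sum_{i=1}^{n-1}\kappa(\delta|_i)$, which forces the terminal block --- the last nonzero symbol $j$ at some level $n_p$, its $K_\delta$ consecutive copies of $q$, and its isolated $q$ at level $2n_p+2b-1$ --- to contribute $j+q(K_\delta+1)\equiv 0\pmod{b-1}$. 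Since this terminal block is never overwritten by a later symbol, $T_\infty\equiv 0\pmod{b-1}$ and $\Lambda_{q,b}/(b-1)\subset\Z$; here I note that $\gcd(q,b-1)=1$ (because $q\mid b$) guarantees that the congruence defining $K_\delta$ has a unique solution in $\{0,\ldots,b-2\}$. The delicate point is that an earlier symbol's isolated $q$ may be overwritten when a further nonzero symbol occurs before level $2m'+2b-1$; the adaptiveness of \eqref{counterexample.eq3} absorbs exactly these overrides, and this case analysis carries most of the bookkeeping.

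Then I would identify the maximal tree mapping $\tilde\kappa$ with $\Lambda(\tilde\kappa)=\Lambda_{q,b}/(b-1)$ by dividing each $\Pi_{\kappa,\infty}(\delta)$ by $b-1$ in base $b$ and reducing the resulting digits into $\{-1,0,\ldots,b-2\}$ by carrying. The mechanism is the identity $q\,b^{L}/(b-1)=q(b^{L-1}+b^{L-2}+\cdots)$: division by $b-1$ turns each \emph{isolated} label $q$ sitting at the far level $L=2m'+2b-1$ into a \emph{run} of labels $q$ reaching down to level $\approx 2m'+2b-2$, while the neighbouring symbol-and-consecutive-$q$ block is only shifted by a bounded carry. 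Checking that the carried digits remain in $\{-1,\ldots,b-2\}$ and stay congruent to the branch symbols modulo $q$ shows $\tilde\kappa$ is a maximal tree mapping, so Theorem \ref{th1.6} yields that $\Lambda_{q,b}/(b-1)$ is a maximal orthogonal set. The same run structure supplies the non-completeness bound: any $\tilde\kappa$-branch $\delta\in\Sigma_q^n$ with $\delta_n\ne 0$ must, to reach a $\tilde\kappa$-main subbranch, traverse a run of order $n$ consecutive labels $q$, so $\# A_\delta(\delta')$ in \eqref{Ddelta.def0} is of order $n$ and $\mathcal{D}_{\tilde\kappa,\delta}\ge\epsilon_0 n$ for a fixed $\epsilon_0>0$; Theorem \ref{main2.thm.old} then shows $\Lambda_{q,b}/(b-1)$ is not a spectrum.

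The hardest part is making the passage from $\kappa$ to $\tilde\kappa$ precise, i.e. the third step. One must control the carries produced by dividing by $b-1$ simultaneously over all branches, prove that they neither leave the admissible digit range nor destroy the congruence-modulo-$q$ labeling, and extract the exact linear growth rate of the $q$-runs, hence of $\mathcal{D}_{\tilde\kappa,\delta}$, uniformly in $\delta$. The interaction between the adaptively truncated consecutive-$q$ blocks of \eqref{counterexample.eq3} and the smeared isolated $q$'s is what makes both the maximal-orthogonality verification and the linear lower bound delicate, so I would isolate it in a preliminary lemma giving the label pattern of $\Lambda_{q,b}/(b-1)$ explicitly before invoking Theorems \ref{th1.6} and \ref{main2.thm.old}.
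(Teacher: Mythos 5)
Your overall strategy coincides with the paper's: boundedness of $\mathcal{D}_{\kappa,\delta}$ (indeed $\mathcal{D}_{\kappa,\delta}\le\mathcal{D}_{\kappa,\delta}(0^\infty)\le K_\delta+1\le b-1$) plus Theorem \ref{main.thm} for the spectral property; the congruence $\Pi_{\kappa,\infty}(\delta)\equiv\sum_j\kappa(\delta|_j)\equiv 0\pmod{b-1}$ forced by the adaptive choice \eqref{counterexample.eq3} to get $\Lambda_{q,b}/(b-1)\subset\Z$; and the observation that division by $b-1$ converts the isolated digit $q$ at level $2m+2b-1$ into a run of digits $q$ of length $m+b\ge n+2$, which yields the linear lower bound on the level-difference quantity and hence non-completeness (the paper invokes the sharper Theorem \ref{main2.newthm} using only the continuation $0^\infty$, but your bound on the full infimum $\mathcal{D}_{\tilde\kappa,\delta}$ is also valid and Theorem \ref{main2.thm.old} applies). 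The one place where you take a genuinely different, and substantially more laborious, route is the \emph{maximality} of $\Lambda_{q,b}/(b-1)$: you propose to build the tree mapping $\tilde\kappa$ explicitly by carrying out the base-$b$ division on every branch and then to verify conditions (i)--(iii) of Definition \ref{treemapping.def} together with the equality $\Lambda(\tilde\kappa)=\Lambda_{q,b}/(b-1)$ --- this is exactly the carry bookkeeping you flag as the hardest part. The paper sidesteps all of it with a two-line duality argument: since $\gcd(b-1,q)=1$, multiplication by $b-1$ maps the zero set $Z_{q,b}\cup\{0\}$ into itself, so if one could adjoin a point $\lambda_0\notin\Lambda_{q,b}/(b-1)$ while preserving orthogonality, then $(b-1)\lambda_0$ could be adjoined to $\Lambda_{q,b}$, contradicting the maximality (indeed the spectrality) of $\Lambda_{q,b}$ already established. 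The explicit tree mapping $\tau_{q,b}$ for $\Lambda_{q,b}/(b-1)$ is then obtained for free from Theorem \ref{th1.6}, and only the digit pattern of individual elements (via $(b-1)\Pi_{\tau_{q,b},\infty}(\delta)=\Pi_{\kappa,\infty}(\eta)$) is needed for the non-spectrum step, not a global description of the relabeled tree. Your route should go through, but you would save yourself the uniform control of carries over all branches by adopting the rescaling trick for the maximality claim and reserving the digit analysis for the single-element computation that feeds Theorem \ref{main2.thm.old}.
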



%
%
%

\subsection{Proof of Theorem \ref{multiple.thm}}
\label{multiple.thm.proofsection}
%
%
The necessity  is obvious. Now we prove  the sufficiency.
%
%
Without loss of generality, we assume $K$ is positive since $-\Lambda$ is a spectrum (maximal orthogonal set) if and only if $\Lambda$ is. Let $\kappa$ be the maximal tree mapping associated with the maximal orthogonal set $K\Lambda$ of the Cantor measure $\mu_{q, b}$.
The existence of such a mapping follows from Theorem \ref{th1.6} and the assumption on $K\Lambda$.
Denote the integral part of a real number $x$ by $\lfloor x \rfloor$.
By Theorem \ref{main.thm} and the assumption that ${\mathcal D}_\tau<\infty$,
it suffices to prove that
\begin{equation}\label{dkappadtau.eq0}
\inf \{{\mathcal D}_{\kappa,\delta}(\delta'), \ \delta'\in \Sigma_q^\ast\}
\le (2\lfloor \log_b K\rfloor+4) ({\mathcal D}_{\tau}+1), \ \delta\in \Sigma_q^\ast.
\end{equation}

Take $\delta\in \Sigma_q^n, n\ge 1$, and let $\delta_1\in \Sigma_q^\ast$ be so chosen that
$\delta\delta_1$ is $\kappa$-regular. 
As
$\Pi_{\kappa, \infty}(\delta\delta_1)\in K\Lambda$, there exists $\zeta\in \Sigma_q^n$ such that
\begin{equation}\label{multiple.thm.pf.eq1} K\Pi_{\tau, n}(\zeta)-\Pi_{\kappa, n} (\delta)
\in b^n\Z.\end{equation}
Let  $\zeta'\in \Sigma_q^\ast$  be so chosen that
$\zeta\zeta'$ is a  $\tau$-main subbranch of $\zeta$ and
\begin{equation}
\mathcal{D}_{\tau, \zeta} (\zeta')=  \mathcal{D}_{\tau, \zeta},\end{equation}
where  the existence of such a tree branch $\zeta'$  follows from
\eqref{Ddelta.delta2}.
Therefore the verification of
\eqref{dkappadtau.eq0} reduces to showing the existence of $\delta'\in \Sigma_q^\ast$ such that
$\delta\delta'$ is a $\kappa$-main branch,
\begin{equation}\label{zetazeta.eq}
K \Pi_{\tau, \infty}(\zeta\zeta')= \Pi_{\kappa, \infty} (\delta\delta'),
\end{equation}
 and
\begin{equation}\label{dkappadtau.eq}
{\mathcal D}_{\kappa,\delta}(\delta')\le (2\lfloor \log_b K\rfloor+4) ({\mathcal D}_{\tau, \zeta}+1).
\end{equation}

By Theorem \ref{th1.6}, there exists a $\kappa$-main branch $\delta_2\in \Sigma_q^\ast$ such that
\begin{equation} \label{zetazeta.eq22}
\Pi_{\kappa, \infty} (\delta_2)=K \Pi_{\tau, \infty}(\zeta\zeta').\end{equation}
Then
\begin{equation}
\Pi_{\kappa, n} (\delta_2)-\Pi_{\kappa, n}(\delta)\in K \Pi_{\tau, n}(\zeta)-\Pi_{\kappa, n}(\delta)+b^n\Z=b^n\Z
\end{equation}
by \eqref{multiple.thm.pf.eq1}. This together with one-to-one correspondence of the mapping $\Pi_{\kappa, n}:\Sigma_q^n\to\Z$
proves $\delta_2=\delta\delta'$ for some $\delta'\in \Sigma_q^\ast$.

The equation \eqref{zetazeta.eq} follow from \eqref{zetazeta.eq22}. Now it remains to prove
\eqref{dkappadtau.eq}.
Without loss of generality, we assume that
 $\Pi_{\kappa, \infty} (\delta\delta')
\ne \Pi_{\kappa, n}(\delta)$, because otherwise ${\mathcal D}_{\kappa,\delta}(\delta')=0$ and hence
\eqref{dkappadtau.eq} follows immediately.
 Thus we may write
\begin{equation}\label{th2.6.pf.eq4}
 \Pi_{\kappa, \infty} (\delta\delta')
=\Pi_{\kappa, n}(\delta)+\sum_{l=1}^L d_l b^{n+m_l-1}
\end{equation}
for a strictly increasing sequence $\{m_l\}_{l=1}^L$ of integers
and some  $d_l\in \{-1,  1, \ldots, b-2\}, 1\le l\le L$.

Also we may assume that
 $\Pi_{\tau, \infty}(\zeta\zeta')\ne \Pi_{\tau, n}(\zeta)$, because otherwise
$$K\Pi_{\tau, \infty}(\zeta\zeta') =K\Pi_{\tau, n}(\zeta)
\in 
 K(-b^n/(b-1), (b-2)b^n/(b-1))$$
and
\begin{equation*}
\Pi_{\kappa, \infty} (\delta\delta')
\not \in (-b^{n+m_L-1}/(b-1), (b-2) b^{n+m_L-1}/(b-1))
\end{equation*}
by \eqref{tb.property} and \eqref{th2.6.pf.eq4}.
This together with \eqref{zetazeta.eq} implies that
$b^{m_L-1}\le K$ and hence
 $${\mathcal D}_{\kappa,\delta}(\delta')\le m_L\le \lfloor\log_b K\rfloor+1.$$
Therefore we can write
$$
\Pi_{\tau, \infty}(\zeta\zeta')=\Pi_{\tau, n}(\zeta)+\sum_{j=1}^N  c_j b^{n+n_j-1},
$$
where $c_j\in \{-1, 1, \ldots, b-2\}, 1\le j\le N$,
and $\{n_j\}_{j=1}^N$ is a strictly increasing sequence of integers.

To prove \eqref{dkappadtau.eq} for  the case that $\Pi_{\tau, \infty}(\zeta\zeta')\ne \Pi_{\tau, n}(\zeta)$, we need the following claim:

\smallskip

{\em Claim 1: 
$\{m_l,1 \le l\le L\}\subset \cup_{j=0}^N [n_j, n_j+\lfloor\log_b K\rfloor+1]$.
}
\begin{proof}
Suppose, on the contrary, that Claim 1 does not hold. Then
there exists $1\le l\le L$ such that $n_{j_0}+\lfloor\log_b K\rfloor+1<m_l<n_{j_0+1}$ for some $0\le j_0\le N$, where we set $n_0=0$ and $n_{N+1}=+\infty$.
Observe that
\begin{equation} \label{mlnj.inclusion.pf1}
  \Pi_{\kappa, n+m_l}(\delta\delta')- K\Pi_{\tau, n+n_{j_0}}(\zeta\zeta')\in b^{n+m_l}\Z
  \end{equation}
by \eqref{zetazeta.eq} and the assumption $m_l<n_{j_0+1}$, and
\begin{eqnarray} \label{mlnj.inclusion.pf2}
& &   \Pi_{\kappa, n+m_l}(\delta\delta')- K\Pi_{\tau, n+n_{j_0}}(\zeta\zeta')\nonumber\\
 & \in  & d_l b^{n+m_l-1}+ \frac{b^{n+m_l-1}-1}{b-1}[-1, b-2]-
  K \frac{b^{n+n_{j_0}}-1}{b-1} [-1, b-2]\nonumber\\
&    \subset &  d_l b^{n+m_l-1}+(-b^{n+m_l-1}, b^{n+m_l-1})
\end{eqnarray}
by the definitions of $\Pi_{\kappa, n+m_l}$ and $\Pi_{\tau,n+ n_{j_0}}$
and the assumption $n_{j_0}+\log_b K+1<m_l$.
Combining \eqref{mlnj.inclusion.pf1} and \eqref{mlnj.inclusion.pf2} leads to the contradiction that $d_l\in \{-1, 1, \ldots, b-2\}$.
This completes the proof of Claim 1.
\end{proof}

To prove \eqref{dkappadtau.eq} for the case that $\Pi_{\tau, \infty}(\zeta\zeta')\ne \Pi_{\tau, n}(\zeta)$, we need another claim:

\smallskip
{\em Claim 2: If $n_{j}+\lfloor \log_b K\rfloor+1 <n_{j+1}$, then there exists $l_0$ such that $m_{l_0}=n_{j+1}$, $m_{l_0-1}\in [n_j, n_j+\lfloor \log_b K\rfloor+1]$
and $d_{l_0}\in q\Z$ if and only if $c_{j+1}\in q\Z$.
}
\begin{proof}
Let $l_0$ be the smallest integer $l$ with $m_l \ge n_{j+1}$.
By Claim 1, $m_{l_0-1}\le n_j+\lfloor \log_b K\rfloor+1\le n_{j+1}-1$.
Observe that
$  \Pi_{\kappa, n+m_{l_0}}(\delta\delta')- K\Pi_{\tau, n+n_{j+1}}(\zeta\zeta')
\in   b^{n+n_{j+1}}\Z$  by \eqref{zetazeta.eq}; and
  \begin{eqnarray}\label{pipinew.eq}
& &   \Pi_{\kappa, n+ m_{l_0}}(\delta\delta')- K\Pi_{\tau, n+n_{j+1}}(\zeta\zeta')\nonumber\\
& \in &  d_{l_0} b^{n+m_{l_0}-1}-Kc_{j+1} b^{n+n_{j+1}-1}+\frac{ b^{n+m_{l_0-1}}}{b-1}
 (-1,  b-2)-\frac{K b^{n+n_{j}}}{b-1}(-1, b-2)\nonumber\\
\qquad & \subset &
d_{l_0} b^{n+m_{l_0}-1}-Kc_{j+1} b^{n+n_{j+1}-1}+b^{n+n_{j+1}-1} (-1, 1).
\end{eqnarray}
Thus
$d_{l_0} b^{m_{l_0}-n_{j+1}}-Kc_{j+1}\in b\Z$.
This together, with the assumptions that $c_{j+1}\in \{-1, 1, \ldots, b-2\}$ and
that $K$ and $b$ are coprime, implies that $m_{l_0}=n_{j+1}$
and $d_{l_0}\in q\Z$ if and only if $c_{j+1}\in q\Z$.
From the argument in \eqref{pipinew.eq}, we see that
\begin{equation}
\Pi_{\kappa, m_{l_0-1}}(\delta\delta')=K\Pi_{\tau, n_{j}}(\zeta\zeta').
\end{equation}
Thus $m_{l_0-1}\ge n_j$, as
$\Pi_{\kappa, m_{l_0-1}}(\delta\delta') \in b^{m_{l_0-1}} \big(-1/(b-1), (b-2)/(b-1)\big)$ and
$K\Pi_{\tau, n_{j}}(\zeta\zeta')\not\in K b^{n_j-1} \big(-1/(b-1), (b-2)/(b-1)\big)$ by \eqref{tb.property}.
This completes the proof of Claim 2.
\end{proof}

\smallskip

Having established the above two claims, let us return to the proof of the inequality \eqref{dkappadtau.eq}.
Note that if $$\{k\in\Z: m_{l_0-1}<k<m_{l_0}\} \not\subset \cup_{j=0}^N [n_j, n_j+\lfloor\log_b K\rfloor+1]$$
for some $1\le l_0\le L$, then by Claim 1, there exists $1\le j_0\le N$ such that $$m_{l_0-1} \le n_{j_0-1}+\lfloor\log_b K\rfloor+1<n_{j_0}\le m_{l_0}.$$
Then $m_{l_0}=n_{j_0}$, $m_{l_0-1}\ge n_{j_0-1}$ and $d_{l_0}\in q\Z$ if and only if $c_{j_0}\in q\Z$ by Claim 2. Thus
$$
\cup_{d_l\not\in q\Z}(m_{l-1},m_{l}) \subset \left(\cup_{j=0}^N [n_j, n_j+\lfloor\log_b K\rfloor+1]\right) \cup \left(\cup_{c_j\not\in q\Z} (n_{j-1},n_j)\right),
$$
and thus
$$
\sum_{d_l\not\in q\Z} (m_l-m_{l-1}-1)
\le (\lfloor\log_b K\rfloor+2) (N+1)+\sum_{c_j\not\in q\Z} (n_j-n_{j-1}-1).
$$
This together with Claim 1, implies
\begin{equation*}
{\mathcal D}_{\kappa,\delta}(\delta')
\le  2(\lfloor\log_b K\rfloor+2) (N+1)+ \sum_{c_j\not\in q\Z} (n_j-n_{j-1}-1)
\le (2\lfloor\log_b K\rfloor+4) ({\mathcal D}_{\tau,\zeta}(\zeta') +1).
\end{equation*}
We get \eqref{dkappadtau.eq} and hence complete the proof of Theorem \ref{multiple.thm}.

%

\subsection{Proof of Theorem \ref{App.thm}} 
\label{rescaledmos.subsection}
$(\Longleftarrow)$\quad
Let
\begin{equation} \label{App.thm.pf.eq3}
\lambda_0=K \Pi_{\tau, M}(\delta)- i b^M,\end{equation}
where  $i\in \Z$ is given  in \eqref{app0.1}.
Inductively applying \eqref{app0.1} proves that
\begin{equation}\label{App.thm.pf.eq4}
 \lambda_0
 =  K \Pi_{\tau, M+N}(\delta)- i b^{M+N}
 =\cdots=
   K \Pi_{\tau, M+nN}(\delta)-i b^{M+nN}, \ n\ge 1.
\end{equation}

Take $\lambda\in \Lambda$.
Now we show that
$\exp({-2\pi i\lambda_0x})$ is orthogonal to $\exp({-2\pi iK\lambda x})$.
By the maximality of the tree mapping $\tau$,
there exists a $\tau$-main branch  $\zeta\in \Sigma_q^m$ for some $m\ge 1$
by Theorem \ref{th1.6} such that
\begin{equation} \label{App.thm.pf.eq2}
\lambda=\Pi_{\tau, \infty}(\zeta).\end{equation}

Also for sufficiently large $n\ge 1$,
 there exists $\lambda_n\in\Lambda$  by
 the maximality of the tree mapping $\tau$
 such that $\lambda_n\ne \lambda$ and
  \begin{equation} \label{App.thm.pf.eq5}
  \lambda_n-\Pi_{\tau, M+Nn}(\delta)\in b^{M+Nn}\Z.
  \end{equation}
  The reason
   for  $\lambda_n\ne \lambda$
is that 
    $\Pi_{\tau, M+Nn}(\delta)\ne \Pi_{\tau, M+Nn}(\zeta)$
  for sufficiently large $n$  by $W=\omega_1\ldots \omega_N\ne 0^N$ by
  \eqref{app0.1}.

  As both $\lambda, \lambda_n\in \Lambda$, there exists a
  nonnegative integer $l$ and an integer $a\in \Z\backslash q\Z$ by \eqref{orthogonalcharacterization} such that
  \begin{equation} \label{App.thm.pf.eq6}
  \lambda-\lambda_n= ab^l.
  \end{equation}
  Now we show that
  \begin{equation} \label{App.thm.pf.eq7} l<M+Nn\end{equation}
   when $n$ is sufficiently large.
  Suppose, on the contrary, that $l\ge M+Nn$. Then
  \begin{equation} \label{App.thm.pf.eq8}
  \lambda-\Pi_{\tau, M+Nn}(\delta)\in b^{M+Nn}\Z.
  \end{equation}
  On the other hand,
  \begin{equation*}
  \Pi_{\tau, M+Nn}(\delta)\in b^{M+Nn} [-1/(b-1), (b-2)/(b-1)]\end{equation*}
by the tree mapping property for $\tau$.
Therefore
$\lambda=\Pi_{\tau, M+Nn}(\delta)$
for sufficiently large $n$, which is a contradiction as
  \begin{equation*}
  \Pi_{\tau, M+Nn}(\delta)\not\in b^{M+N(n-1)} (-1/(b-1), (b-2)/(b-1))\end{equation*}
    by $W=\omega_1\ldots \omega_N\ne 0^N$ and the tree mapping property for $\tau$.

  Combining \eqref{App.thm.pf.eq5},
  \eqref{App.thm.pf.eq6} and \eqref{App.thm.pf.eq7} and recalling that $K$ and $b$ are co-prime, we obtain that
  \begin{equation} \label{App.thm.pf.eq9}
 K \lambda- K\Pi_{\tau, M+Mn}(\delta)=\tilde a b^l
  \end{equation}
  for some integers $0\le l<M+Nn$ and $\tilde a\in\Z\backslash q\Z$.
  Thus the inner product between $\exp({-2\pi i\lambda_0x})$ and $\exp({-2\pi iK\lambda x})$ is equal to zero by
  \eqref{orthogonalcharacterization},
   \eqref{App.thm.pf.eq4} and \eqref{App.thm.pf.eq9}.
   This proves that $K\Lambda$ is not a maximal orthogonal set as
    $\lambda\in \Lambda$ is chosen arbitrarily. 

  \bigskip

($\Longrightarrow$)\quad 
  By \eqref{orthogonalcharacterization}
  and the assumption on the rescaled set $K\Lambda$, there exists
   a maximal orthogonal set $\Theta$ of the Cantor measure $\mu_{q, b}$ such that
   \begin{equation}\label{App.thm.pf.part2eq1}
   K\Lambda\varsubsetneq \Theta\subset \Z.
   \end{equation}
 Take $\vartheta_0\in \Theta\backslash (K\Lambda)$.
 Then
 \begin{equation} \label{App.thm.pf.part2eq2}
 \vartheta_0=\Pi_{\kappa, \infty}(\zeta_0)=\Pi_{\kappa, m}(\zeta_0)
 \end{equation}
 for some $\kappa$-main branch $\zeta_0\in \Sigma_q^m, m\ge 1$, where
   $\kappa$ is the maximal tree mapping associated with the maximal orthogonal set $\Theta$. 

 Let $\tau$ be the maximal tree mapping in Theorem \ref{th1.6}
 such that $\Lambda=\Lambda(\tau)$.
To establish the necessity, we need the following claim:

 {\em Claim 3: Let $n\ge 1$. For any $\zeta\in \Sigma_q^n$ there exists a unique $\delta\in \Sigma_q^n$ such that $\Pi_{\kappa, n}(\zeta)-K \Pi_{\tau, n}(\delta)\in b^n\Z$}.

\begin{proof}
  Observe that
  \begin{equation} \label{App.thm.pf.part2eq3}
   K\Pi_{\tau, n}(\delta_1)-K\Pi_{\tau, n}(\delta_2)\not\in b^n\Z
  \quad {\rm for \ all \ distinct} \ \delta_1, \delta_2\in \Sigma_q^n,\end{equation}
because
 $b/q\in \Z$,  $K$ and $b$ are coprime, and
$\Pi_{\tau, n}(\delta_1)-\Pi_{\tau, n}(\delta_2)=a b^l$
 for some $0\le l\le n-1$ and $a\not\in q\Z$.
On the other hand,
\begin{equation} \label{App.thm.pf.part2eq4}
\{K\Pi_{\tau, n}(\delta): \ \delta\in \Sigma_q^n\}+b^n\Z= K\Lambda+b^n\Z
\subset \Theta+b^n\Z=
\{\Pi_{\kappa, n}(\zeta): \ \zeta\in \Sigma_q^n\}+b^n\Z\end{equation}
by \eqref{App.thm.pf.part2eq1}.
Combining \eqref{App.thm.pf.part2eq3} and \eqref{App.thm.pf.part2eq4} leads to
\begin{equation} \label{App.thm.pf.part2eq5}
\{K\Pi_{\tau, n}(\delta): \ \delta\in \Sigma_q^n\}+b^n\Z=
\{\Pi_{\kappa, n}(\zeta): \ \zeta\in \Sigma_q^n\}+b^n\Z.\end{equation}
 Then Claim 3 follows from \eqref{App.thm.pf.part2eq5} and \eqref{App.thm.pf.part2eq3}.
\end{proof}

To establish the necessity, we need another claim:

{\em Claim 4: $\vartheta_0\not \in K\Z$.}

\begin{proof} Suppose, on the contrary, that $\vartheta_0\in K\Z$.
Then for any $\lambda\in \Lambda$,
there exist $a\in \Z\backslash q\Z$ and $0\le l\in \Z$ by \eqref{orthogonalcharacterization} and \eqref{App.thm.pf.part2eq1}
such that $\vartheta_0-K\lambda=a b^l$. This together with
the co-prime assumption between $K$ and $b$ implies that $a/K\in \Z$ and
$0\ne \vartheta_0/K-\lambda\in (a/K) b^l$. Thus $\Lambda\cup\{\vartheta_0/K\}$
is an orthogonal set for the measure $\mu_{q, b}$ by \eqref{orthogonalcharacterization}, which contradicts to the maximality of the set $\Lambda$.
\end{proof}


Now we continue our proof of the necessity.
Let $N$ be the smallest positive integer such that $(b^N-1)\vartheta_0/K\in \Z$, where
the existence follows from the co-prime property between $K$ and $b$.
By Claim 4, there exists
$\omega_j\in \{-1,0, \ldots, b-2\},
1\le j\le N$, such that  the word $W:=\omega_1\omega_2\cdots\omega_{N}\ne 0$
and
\begin{equation} \label{App.thm.pf.part2eq8}
\frac{\vartheta_0}{K}=c.\omega_N\cdots\omega_2\omega_1\omega_N\cdots\omega_2\omega_1\cdots=
c+\frac{\sum_{j=1}^N \omega_j b^{j-1}}{b^N-1}
\end{equation}
for some integer $c\in \Z$. Let
$W^\prime=\omega_1^\prime \omega_2^\prime\cdots \omega_N^\prime$ be
so chosen that $\omega_j^\prime\in \{-1, 0, \ldots, b-2\}, 1\le j\le N$, and
\begin{equation} \label{App.thm.pf.part2eq9}
\sum_{j=1}^N(\omega_j'+\omega_j)b^{j-1}=\left\{\begin{array}
{ll} 0  & {\rm if}\  \sum_{j=1}^N \omega_j b^{j-1}\in
\frac{b^N-1}{b-1} [-1, 1)\\
b^N-1 & {\rm if}\  \sum_{j=1}^N \omega_j b^{j-1}\in
\frac{b^N-1}{b-1}
[1, b-2].
\end{array}\right.
\end{equation}
The existence of such a word $W^\prime$ follows from the observation that
$$\Big\{\sum_{j=1}^N \omega_j b^{j-1}, \omega_j\in \{-1, 0, \ldots, b-2\}\Big\}=
\Big(\frac{b^N-1}{b-1}[-1, b-2]\Big)\cap \Z.$$

Let $n> m/N$ and set $\zeta_{nN}=\zeta_00^{nN-m}\in \Sigma_q^{nN}$.
By Claim 3 and the $\kappa$-main branch assumption for $\zeta_0$,
there exists $\delta_{nN}\in \Sigma_q^{nN}$ such that
\begin{equation} \label{App.thm.pf.part2eq10}
K\Pi_{\tau, nN}(\delta_{nN})-\vartheta_0
\in b^{nN}\Z.
\end{equation}
Combining \eqref{App.thm.pf.part2eq8},
\eqref{App.thm.pf.part2eq9} and \eqref{App.thm.pf.part2eq10}
and recalling that $K$ and $b$ are coprime,  we obtain
\begin{equation*}\label{App.thm.pf.part2eq11}
(b^N-1)(\Pi_{\tau, nN}(\delta_{nN})-\tilde c)+
\sum_{j=1}^N \omega_j^\prime b^{j-1} \in b^{nN}\Z,
\end{equation*}
where
\begin{equation*}
\tilde c=\left\{\begin{array}
{ll} c & {\rm if}\  \sum_{j=1}^N \omega_j b^{j-1}\in
\frac{b^N-1}{b-1} [-1, 1)\\
c-1 & {\rm if}\  \sum_{j=1}^N \omega_j b^{j-1}\in
\frac{b^N-1}{b-1}
[1, b-2].
\end{array}\right.
\end{equation*}
Therefore
\begin{equation}\label {App.thm.pf.part2eq12}
\Pi_{\tau, nN}(\delta_{nN})-\tilde c-
\Big(\sum_{j=1}^N \omega_j^\prime b^{j-1}\Big) \big(1+b^N+\cdots+ b^{(n-1)N}\big)
\in b^{nN}\Z.
\end{equation}

By the construction of $\omega_j^\prime, 1\le j\le N$,
$\sum_{j=1}^N \omega_j^\prime b^{j-1}\in \frac{b^N-1}{b-1}(-1, b-2]$.
If either $\sum_{j=1}^N \omega_j^\prime b^{j-1}\in  \frac{b^N-1}{b-1}(-1, b-2)$
or $\sum_{j=1}^N \omega_j^\prime b^{j-1}=\frac{b^N-1}{b-1}(b-2)$ and $\tilde c\le 0$,
then for sufficiently large $k$,
$$\tilde c+\Big(\sum_{j=1}^N \omega_j^\prime b^{j-1} \Big)\big(1+b^N+\cdots+
 b^{(k-1)N}\big)=\sum_{j=1}^{kN} \theta_j b^{j-1}$$
 for some $\theta_j\in \{-1, 0, \ldots, b-2\}, 1\le j\le kN$,
 as it is contained in $[-(b^{kN}-1)/(b-1), (b^{kN}-1)(b-2)/(b-1)]$.
This together with \eqref{App.thm.pf.part2eq12} implies that
\begin{equation*}
\Pi_{\tau, nN}(\delta_{nN})=
\sum_{j=1}^{kN} \theta_j b^{j-1}+
\sum_{j=1}^N \omega_j^\prime b^{j-1} \big(b^{kN}+\cdots+ b^{(n-1)N}\big)
\end{equation*}
for $n\ge k$. Thus
there exists $\delta\in \Sigma_q^\infty$ such that
$\delta|_{nN}=\delta_{nN}$
and
\begin{equation*}
\tau(\delta|_{nN+j})=\omega_j^\prime, 1\le j \le N
\end{equation*}
for $n\ge k$, which proves the  desired conclusion.

Now consider the case that
 $\sum_{j=1}^N \omega_j^\prime b^{j-1}=\frac{b^N-1}{b-1}(b-2)$ and $\tilde c>0$.
 In this case,  $\omega_j^\prime=b-2$ for all $1\le j\le N$
 and  $N=1$ by the selection of the integer $N$.
 Further we obtain from  \eqref{App.thm.pf.part2eq12} that
 \begin{equation*}
 \Pi_{\tau, n}(\delta_{n})-\tilde c+1+\sum_{j=1}^{n}
 b^{j-1}\in b^n\Z,
 \end{equation*}
 which implies that
 there exists $\delta\in \Sigma_q^\infty$ such that
$\delta|_{n}=\delta_{n}$
and
$\tau(\delta|_{n})=-1$
for sufficiently large $n$, which proves the desired conclusion.

\subsection{Proof of Theorem \ref{counterexample}}
\label{example.section}
First we show that $\Lambda_{q, b}$ is a spectrum of the Cantor measure $\mu_{q, b}$.
Observe that  $\kappa$ is a maximal tree mapping,
every $\delta\in \Sigma_q^\ast$ is $\kappa$-regular, and
$\Lambda_{q, b}=\Lambda(\kappa)$. We then obtain from Theorem \ref{th1.6}
that
\begin{equation}\label{counterexample.pf.eq1}
 \Lambda_{q, b}\ \ \text{is a maximal orthogonal set of the Cantor measure}\
 \mu_{q, b}.\end{equation}
From  the definition of the maximal tree mapping $\kappa$ it follows that
\begin{equation}\label{counterexample.pf.eq2}
{\mathcal D}_{\kappa, \delta}\le {\mathcal D}_{\kappa, \delta}(0^\infty)\le K_\delta+1\le b-1 \ \ \text{for  all} \ \ \delta\in \Sigma_q^\ast,
\end{equation}
where $K_\delta$ is given in \eqref{counterexample.eq3}.
Therefore the spectral property for
$\Lambda_{q, b}$ holds by
 \eqref{counterexample.pf.eq1}, \eqref{counterexample.pf.eq2} and
  Theorem \ref{main.thm}.

Next we prove that $\Lambda_{q, b}/(b-1)$ is a  maximal orthogonal set for the Cantor measure $\mu_{q, b}$.
From  \eqref{orthogonalcharacterization} and the  spectral property for the  set $\Lambda_{q, b}$
We obtain   that
\begin{equation}
\label{counterexample.pf.eq5}
\Lambda_{q, b}-\Lambda_{q, b}\subset
\{ b^ja:\  0\le j\in \Z, a\in \Z\backslash q\Z\} \cup \{0\}.
\end{equation}
On the other hand,
$$
0\in \Lambda_{q, b}\subset \Z
$$
and for any $\delta\in \Sigma_q^*$,
\begin{equation} \label{counterexample.pf.eq6}
\Pi_{\kappa, \infty}(\delta)=\sum_{j=1}^\infty \kappa(\delta|_j)b^{j-1}\in \sum_{j=1}^\infty \kappa(\delta|_j) +(b-1)\Z=(b-1)\Z\end{equation}
by \eqref{counterexample.eq1}--\eqref{counterexample.eq3}.
Combining \eqref{counterexample.pf.eq5} and \eqref{counterexample.pf.eq6} leads to
\begin{equation*}
 (\Lambda_{q, b}-\Lambda_{q, b})/(b-1)\subset
\{b^ja:\   0\le j\in \Z, a\in \Z\backslash q\Z\} \cup \{0\},
\end{equation*}
and hence
$\Lambda_{q, b}/(b-1)$ is an orthogonal set for the Cantor measure $\mu_{q, b}$ by  \eqref{orthogonalcharacterization}.
Now we establish the maximality of the rescaled set $\Lambda_{q, b}/(b-1)$. Suppose, on the contrary, that there exists $\lambda_0\not\in \Lambda_{q, b}/(b-1)$ such that
 $\tilde \Lambda_{q, b}:=\Lambda_{q, b}/(b-1)\cup \{\lambda_0\}$
  is an orthogonal set for the Cantor measure
$\mu_{q, b}$. Then
\begin{eqnarray*} (b-1)\tilde \Lambda_{q, b}- (b-1)\tilde \Lambda_{q, b} & \subset &
(b-1)\big(\{ b^ja:\   0\le j\in \Z, a\in \Z\backslash q\Z\} \cup \{0\}\big)\\
&\subset &
\{ b^ja:\  0\le j\in \Z, a\in \Z\backslash q\Z\} \cup \{0\}
\end{eqnarray*}
and
$(b-1)\tilde \Lambda_{q, b}$ is an orthogonal set for
the Cantor measure $\mu_{q, b}$ by
\eqref{orthogonalcharacterization}.
This contradicts the spectral property for $\Lambda_{q, b}$.

Finally we prove that $\Lambda_{q, b}/(b-1)$ is not a spectrum of the Cantor measure $\mu_{q, b}$.
Let $\tau_{q, b}:\Sigma_q^\ast\to \{-1, 0, \ldots, b-2\}$
be the maximal tree mapping such that
$\Lambda_{q, b}/(b-1)=\Lambda(\tau_{q, b})$.
By Theorem \ref{main2.newthm}, the
non-spectral property for the set $\Lambda_{q, b}/(b-1)$
reduces to showing that
\begin{equation}\label{counterexample.pf.eq7}
 {\mathcal D}_{\tau_{q, b}, \delta}(0^\infty)\ge n
\end{equation}
for all $\delta \in \Sigma_q^n \backslash \Sigma_q^{n-1}, n\ge 2$, being $\tau_{q, b}$-regular.
Recall that
$\Lambda_{q, b}=\Lambda(\kappa)$.
This together with \eqref{counterexample.eq1} and \eqref{counterexample.eq2}
implies the existence of $\eta\in \Sigma_q^m, m\ge 1$,
such that
\begin{equation}  \label{counterexample.pf.eq8}
(b-1)\Pi_{\tau_{q, b}, \infty}(\delta)=\Pi_{\kappa, \infty}(\eta)
 =\sum_{j=1}^{m+b-2} d_j b^{j-1}+ q\cdot b^{2m+2b-2},
\end{equation}
where $d_j\in \{0, 1, \cdots, q\}$ for all $1\le j\le m+b-2$ and $d_m\in \{1, \ldots, q-1\}$.
 Write
 \begin{equation} \label{counterexample.pf.eq9}
\Pi_{\tau_{q, b}, \infty}(\delta)=\sum_{j=1}^\infty c_j  b^{j-1}
=\sum_{j=1}^M c_j  b^{j-1}
\end{equation}
where $c_j:=\tau_{q, b}(\delta|_j)\in \{-1, 0, \ldots, b-2\}$ and  $M\ge n$ is so chosen that
$c_M\ne 0$. The existence of such an integer follows from
$\tau_{q, b}(\delta|_n)\in \Z\backslash q\Z$ and
 $\tau_{q, b}(\delta|_j)=0$ for sufficiently large $j$.
Combining \eqref{counterexample.pf.eq8} and \eqref{counterexample.pf.eq9} leads to
 \begin{eqnarray}\nonumber
 \sum_{j=1}^M c_j  b^{j-1}
 &  =   &  \frac{1}{b-1} \Big(\sum_{j=1}^{m+b-2} d_j b^{j-1}+ q\cdot b^{m+b-2}\Big)+ q \sum_{j=m+b-2}^{2m+2b-3} b^j \\ \nonumber
 & \in & q \sum_{j=m+b-2}^{2m+2b-3} b^j+ \Big(0,\frac {b-2}{b-1}\Big)b^{m+b-2},
\end{eqnarray}
where the last inequality follows as $q\le b-3$.
%
This, together with $c_j\in \{-1, 0, 1,  \ldots, b-2\}, 1\le j\le M$,
 implies that
\begin{equation}\label{counterexample.pf.eq13} M=2m+2b-2\ \ {\rm and}\  \  c_j=q, m+b-2 < j\le M.
\end{equation}
On the other hand, for $\delta\in \Sigma_q^n\backslash \Sigma_q^{n-1}$
it follows from  the tree mapping property for $\tau_{q, b}$ that
$c_n\not\in q\Z$. Thus  $n\le m+b-2$ according to \eqref{counterexample.pf.eq13}.
Therefore
$${\mathcal D}_{\tau_{q, b}, \delta}(0^\infty)\ge M-(m+b-2)\ge n.$$
This proves \eqref{counterexample.pf.eq7} and then the conclusion that $\Lambda_{q, b}$
is not a spectrum of the Cantor set ${\mathcal \mu}_{q, b}$ by Theorem \ref{main2.newthm}.

%

%
%


\end{document}